\newtheorem{thm}{Theorem}[section]
\newtheorem{prop}[thm]{Proposition}
\newtheorem{cor}[thm]{Corollary}
\theoremstyle{definition}
\newtheorem{example}[thm]{Example}
\newcommand{\Q}{\mathbb{Q}}
\newcommand{\M}{{\mathfrak M}}
\newcommand{\A}{{\mathfrak A}}
\newcommand{\p}{{\mathfrak p}}
\newcommand{\B}{{\mathfrak B}}
\newcommand{\Bp}{{\mathfrak B}_{\p}}
\newcommand{\AH}{{\mathfrak A}_{H}}
\newcommand{\OK}{{\mathfrak O}_{K}}
\newcommand{\OKp}{{\mathfrak O}_{K,{\p}}}
\newcommand{\OL}{{\mathfrak O}_{L}}
\newcommand{\OLO}{{\mathfrak O}_{L_{0}}}
\newcommand{\OLp}{{\mathfrak O}_{L,{\p}}}
\newcommand{\OF}{{\mathfrak O}_{F}}
\renewcommand{\OE}{{\mathfrak O}_{E}}
\newcommand{\perm}[1]{\mbox{Perm}(#1)}
\newcommand{\Gal}[1]{\mbox{Gal}(#1)}
\newcommand{\ndivides}{ \hspace{0mm} \nmid }
\renewcommand{\bar}{\overline}
\title{Commutative Hopf-Galois module structure of tame extensions}
\author{Paul J. Truman}
\address{School of Computing and Mathematics, Keele University, Staffordshire, ST5 5BG, UK}
\email{P.J.Truman@Keele.ac.uk}
\subjclass[2000]{Primary 11R33; Secondary 11S23}
\keywords{Hopf-Galois structure, Hopf-Galois module theory, Galois module structure, Associated order}
\begin{document}

\maketitle

\begin{abstract}
We prove three theorems concerning the Hopf-Galois module structure of fractional ideals in a finite tamely ramified extension  of $ p $-adic fields or number fields which is $ H $-Galois for a commutative Hopf algebra $ H $. Firstly, we show that if $ L/K $ is a tame Galois extension of $ p $-adic fields then each fractional ideal of $ L $ is free over its associated order in $ H $. We also show that this conclusion remains valid if $ L/K $ is merely almost classically Galois. Finally, we show that if $ L/K $ is an abelian extension of number fields then every ambiguous fractional ideal of $ L $ is locally free over its associated order in $ H $. 
\end{abstract}

\section{Introduction} \label{section_introduction}

Hopf-Galois module theory is a generalization of the classical Galois module theory of algebraic integers. Classically, one considers a finite Galois extension of local or global fields $ L/K $ with group $ G $; the field $ L $ is then a free module of rank one over the group algebra $ K[G] $, and for each fractional ideal $ \B $ of $ L $ we seek criteria for $ \B $ to be free (or locally free) over its associated order in $ K[G] $:
\[ \A_{K[G]}(\B) = \{ z \in K[G] \mid z \cdot x \in \B \mbox{ for all } x \in \B \}. \]
The group algebra $ K[G] $ is a Hopf algebra, and its action on the field $ L $ is an example of a {\em Hopf-Galois structure} on the extension $ L/K $ (the formal definition follows in section \ref{section_HGS}). However, there may be other Hopf algebras that give Hopf-Galois structures on $ L/K $, and each fractional ideal $ \B $ of $ L $ has an associated order in each of these, defined analogously to $ \A_{K[G]}(\B) $ above. This raises the possibility of comparing the structure of $ \B $ as a module over its associated orders in the various Hopf algebras giving Hopf-Galois structures on the extension. Furthermore, a finite separable, but non-normal, extension of local or global fields may admit Hopf-Galois structures; here of course the techniques of classical Galois module theory are not available, but these Hopf-Galois structures allow us to study such extensions and their fractional ideals as described above. For a thorough survey of the applications of Hopf algebras to questions of local Galois module theory, we refer the reader to \cite{ChTwe}. 
\\ \\
These techniques have proven to be fruitful in the study of wildly ramified Galois extensions, where determining the structure of a fractional ideal $ \B $ over its associated order $ \A_{K[G]}(\B) $ is a difficult problem. The most striking results in this direction are due to Byott \cite{By97a}, who exhibited a class of wildly ramified Galois extensions of $ p $-adic fields $ L/K $ for which the valuation ring $ \OL $ is not free over $ \A_{K[G]}(\OL) $, but is free over $ \AH(\OL) $ for some other Hopf algebra $ H $ giving a Hopf-Galois structure on the extension. Thus for these extensions this Hopf-Galois structure is ``better" than that given by $ K[G] $ for the purposes of describing $ \OL $. 
\\ \\ \\
If $ L/K $ is a Galois extension of $ p $-adic fields which is at most tamely ramified (henceforth, simply ``tame") then by Noether's Theorem \cite[Theorem 3]{Fro} $ \OL $ is a free $ \OK[G] $-module, and so there is no possibility of a different Hopf-Galois structure giving a ``better" description of $ \OL $. However, in \cite{PJT_Towards} and \cite{PJT_Integral}, we found that for many such extensions there is remarkable uniformity in the descriptions of $ \OL $ afforded by the various Hopf-Galois structures. For example, if $ L/K $ is a Galois extension of $ p $-adic fields and $ H $ is a Hopf algebra giving a Hopf-Galois structure on $ L/K $ then $ \OL $ is a free $ \AH(\OL) $-module if the extension is unramified \cite[Theorem 3.4]{PJT_Towards}, or if $ H $ is commutative and $ p \ndivides [L:K] $ \cite[Theorem 4.4]{PJT_Towards}. The main theorem of the present paper (Theorem \ref{thm_main})
is a considerable generalization of the latter result: we prove that if $ L/K $ is a tame Galois extension of $ p $-adic fields, $ H $ a commutative Hopf algebra giving a Hopf-Galois structure on $ L/K $, and $ \B $ a fractional ideal of $ L $, then $ \B $ is a free $ \AH(\B) $-module. The results of sections \ref{section_action_of_G0} and \ref{section_induced} culminate in a proof of this theorem. In section \ref{section_non_normal} we prove an analogue of Theorem \ref{thm_main} for non-normal extensions of $ p $-adic fields that are {\em almost classically Galois} (a separable extension $ L/K $ with Galois closure $ E/K $ is almost classically Galois if $ \Gal{E/L} $ has a normal complement in $ \Gal{E/K} $ \cite[Definition 4.2]{GreitherPareigis}). Finally, in section \ref{section_number_fields} we prove an analogue of Theorem \ref{thm_main} for abelian extensions of number fields. 

\section{Hopf-Galois structures and Greither-Pareigis theory} \label{section_HGS}

Let $ K $ be a field and $ H $ a $ K $-Hopf algebra with conuit $ \varepsilon : H \rightarrow K $ and comultiplication $ \Delta: H \rightarrow H \otimes_{K} H $. For $ h \in H $, write $ \Delta(h) = \sum_{(h)} h_{(1)} \otimes h_{(2)} \in H \otimes_{K} H $ (Sweedler's notation). We say that an extension $ L $ of $ K $ is an {\em $ H $-module algebra} if $ L $ is an $ H $-module, $ h \cdot 1 = \varepsilon (h)1 $, and for all $ h \in H $ and $ s,t \in L $ we have $ h \cdot(st) = \sum_{(h)} (h_{(1)} \cdot s) (h_{(2)} \cdot t) $. We say that $ L $ is an {\em $ H $-Galois extension of $ K $}, or that $ H $ gives a {\em Hopf-Galois structure} on $ L/K $, if $ L $ is an $ H $-module algebra and the $ K $-linear map
\[ j: L \otimes_{K} H \rightarrow \mbox{End}_{K}(L) \]
defined by
\[ j(s \otimes h)(t) = s(h \cdot t) \mbox{ for all } s,t \in L, \; h \in H \]
is an isomorphism of $ K $-vector spaces. The prototypical example of a Hopf-Galois structure on a finite extension of fields is the action of the group algebra $ K[G] $ on a finite Galois extension $ L/K $ with group $ G $. 
\\ \\ 
In the case that $ L/K $ is separable, a theorem of Greither and Pareigis (\cite[Theorem 3.1]{GreitherPareigis} or \cite[Theorem 6.8]{ChTwe}) implies that each Hopf algebra giving a Hopf-Galois structure on $ L/K $ has the form $ E[N]^{G} $, where $ E/K $ is the normal closure of $ L/K $, $ G=\Gal{E/K} $, $ N $ is a regular subgroup of the permutation group of the set $ X = \Gal{E/K} / \Gal{E/L} $ that is normalized by the image of the left translation map $ \lambda : G \rightarrow \perm{X} $, and $ G $ acts on $ E[N] $ by acting on $ E $ as Galois automorphisms and on $ N $ by conjugation via $ \lambda $ . Such a Hopf algebra acts on $ L $ by
\begin{equation}\label{eqn_GP_action} 
\left( \sum_{\eta \in N} c_{\eta} \eta \right) \cdot x = \sum_{\eta \in N} c_{\eta} \eta^{-1}(\bar{1})[x] \hspace{5mm} (c_{\eta} \in E, \; x \in L).
\end{equation}
If $ L/K $ is a Galois extension then $ X = G $ and $ \lambda $ is the left regular representation of $ G $. In this case one example of a regular subgroup of $ \perm{G} $ normalized by $ \lambda(G) $ is $ \rho(G) $, the image of $ G $ under the right regular representation. In fact, since $ \lambda(G) $ centralizes $ \rho(G) $ we have $ L[\rho(G)]^{G} = L^{G}[\rho(G)] = K[\rho(G)] $, and the action expressed in equation \eqref{eqn_GP_action}  reduces to the usual action of $ K[G] $ on $ L $. We call this the {\em classical Hopf-Galois structure} on $ L/K $. We call any other Hopf-Galois structures admitted by a Galois extension, and all those admitted by a non-normal extension, {\em nonclassical}. 
\\ \\
We will view Hopf algebras arising from the theorem of Greither and Pareigis as distinct if they differ as sets, and always think of such a Hopf algebra $ H=E[N]^{G} $ as acting via equation \eqref{eqn_GP_action}; we may therefore speak of {\em the} Hopf-Galois structure given by $ H $. We shall say that $ N $ is the {\em underlying group} of $ H $, and refer to the isomorphism class of $ N $ as the {\em type} of $ H $. The integral group ring $ \Lambda = \OE[N] $ is an $ \OE $-order in the group algebra $ E[N] $, and $ \Lambda^{G} = \OE[N]^{G} $ is an $ \OK $-order in the Hopf algebra $ E[N]^{G} $. It is a natural analogue of the integral group ring $ \OK[G] $ within a group algebra $ K[G] $ acting on a Galois extension $ L/K $ with group $ G $. In particular, we have:

\begin{prop} \label{prop_Lambda_G_subset_A_H}
If $ \B $ is a fractional ideal of $ L $ and $ \B = \B^{\prime} \cap L $ for some ambiguous fractional ideal $ \B^{\prime} $ of $ E $, then $ \Lambda^{G} \subseteq \AH(\B) $.
\end{prop}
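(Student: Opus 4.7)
My plan is to take an arbitrary element $ z = \sum_{\eta \in N} c_{\eta} \eta \in \Lambda^{G} $ (where $ c_{\eta} \in \OE $) and show directly that $ z \cdot x \in \B $ for every $ x \in \B $. Since $ z $ lies in $ H = E[N]^{G} $ and $ L $ is an $ H $-module algebra, we automatically have $ z \cdot x \in L $. Because $ \B = \B^{\prime} \cap L $, it therefore suffices to verify the containment $ z \cdot x \in \B^{\prime} $.

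To do this I will apply the Greither--Pareigis action formula \eqref{eqn_GP_action}, which gives
\[ z \cdot x = \sum_{\eta \in N} c_{\eta} \, \eta^{-1}(\bar{1})[x]. \]
Each symbol $ \eta^{-1}(\bar{1}) $ is a coset in $ X = G/\Gal{E/L} $; I will note that evaluation of such a coset on an element of $ L $ is well defined, since any two representatives differ by an element of $ \Gal{E/L} $ which fixes $ L $ pointwise, and so $ \eta^{-1}(\bar{1})[x] = \sigma_{\eta}(x) $ for any chosen representative $ \sigma_{\eta} \in G $.

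The hypothesis that $ \B^{\prime} $ is an ambiguous fractional ideal of $ E $ means precisely that $ \sigma(\B^{\prime}) = \B^{\prime} $ for every $ \sigma \in G $. Since $ x \in \B \subseteq \B^{\prime} $, this yields $ \sigma_{\eta}(x) \in \B^{\prime} $ for each $ \eta $. The coefficients $ c_{\eta} $ lie in $ \OE $ and $ \B^{\prime} $ is an $ \OE $-module, so each term $ c_{\eta} \sigma_{\eta}(x) $ lies in $ \B^{\prime} $, and summing gives $ z \cdot x \in \B^{\prime} $. Combined with $ z \cdot x \in L $, we conclude $ z \cdot x \in \B^{\prime} \cap L = \B $, so $ z \in \AH(\B) $.

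There is no real obstacle here: the argument is essentially a bookkeeping exercise in translating the Greither--Pareigis action into the underlying action of $ G $ on $ E $ and exploiting $ G $-stability of $ \B^{\prime} $. The only subtle point worth flagging explicitly in the write-up is the well-definedness of $ \bar{g}[x] $ for $ x \in L $, which is what allows the hypothesis on $ \B^{\prime} $ (formulated over $ E $) to be leveraged against the action of $ H $ on $ L $.
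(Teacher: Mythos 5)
Your proof is correct and follows essentially the same route as the paper: apply the Greither--Pareigis action formula, use ambiguity of $\B^{\prime}$ and the fact that the coefficients lie in $\OE$ to place each term in $\B^{\prime}$, and intersect with $L$. The extra remark on the well-definedness of evaluating cosets of $\Gal{E/L}$ on elements of $L$ is a reasonable clarification but does not change the argument.
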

\begin{proof}
This is a minor generalization of \cite[Proposition 2.5]{PJT_Towards}. Let $ x \in \B $ and $ z \in \Lambda^{G} $. Then clearly $ z \cdot x \in L $. On the other hand, we may write $ z =  \sum_{n \in N} c_{n} n $ with $ c_{n} \in \OE $, and then by equation \eqref{eqn_GP_action} we have
\[ z \cdot x = \sum_{n \in N} c_{n} n^{-1}(\bar{1})[x]. \]
For each $ n \in N $, any group element representing $ \eta^{-1}(\bar{1}) $ is a Galois automorphism of $ E $, so (since $ \B^{\prime} $ is an ambiguous fractional ideal) $ n^{-1}(\bar{1})[x] \in \B^{\prime} $ for each $ n \in N $. Therefore $ z \cdot x \in \B^{\prime} $, and so in fact $ z \cdot x \in \B^{\prime} \cap L = \B $, whence $ z \in \AH(\B) $. 
\end{proof}

We note that if $ L/K $ is an extension of local fields then every fractional ideal of $ E $ is ambiguous, and so $ \Lambda^{G} \subseteq \AH(\B) $ for every fractional ideal of $ L $. In \cite{PJT_Towards} and \cite{PJT_Integral} we found that for many tame extensions $ L/K $ we actually have $ \AH(\OL)=\Lambda^{G} $, providing a nice analogue of the fact that if $ L/K $ is Galois with group $ G $ then $ \A_{K[G]}(\OL) = \OK[G] $ by Noether's Theorem. We will continue to focus on $ \Lambda^{G} $ in the present paper. 

\section{The action of inertia} \label{section_action_of_G0}

Let $ L/K $ be a tame Galois extension of $ p $-adic fields with group $ G $, let $ \B $ be a fractional ideal of $ L $, and let $ H $ be a Hopf algebra giving a Hopf-Galois structure on $ L/K $. By the theorem of Greither and Pareigis $ H =  L[N]^{G} $ for $ N $ some regular subgroup of $ \perm{G} $ normalized by $ \lambda(G) $. In \cite[Theorems 1.1 and 1.2]{PJT_Integral} we showed that if the inertia subgroup $ G_{0} $ of $ G $ acts trivially on $ N $  then $ \OL $ is a free $ \Lambda^{G} $-module. In this section we will assume that $ H $ is commutative (i.e. $ N $ is abelian), and prove the following variant of this result:

\begin{thm} \label{thm_G0_trivial_p_part}
Suppose that $ N $ is abelian and that $ G_{0} $ acts trivially on the $ p $-part of $ N $. Then $ \B $ is a free $ \Lambda^{G} $-module. 
\end{thm}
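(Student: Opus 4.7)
The approach is to reduce to \cite[Theorem 1.2]{PJT_Integral}, which treats the case that $G_0$ acts trivially on all of $N$, by splitting off the prime-to-$p$ part of $N$.

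Since $N$ is abelian, write $N = N_{p} \times N_{p'}$ with $N_{p}$ the Sylow $p$-subgroup and $N_{p'}$ the subgroup of $p$-regular elements. Both factors are characteristic in $N$ and hence stable under the conjugation action of $G$, yielding a $G$-equivariant isomorphism
\begin{equation*}
\OL[N] \;\cong\; \OL[N_{p}] \otimes_{\OL} \OL[N_{p'}].
\end{equation*}
By hypothesis $G_{0}$ acts trivially on $N_{p}$; by tameness $|G_{0}|$ is prime to $p$, and $|N_{p'}|$ is prime to $p$ by construction. Consequently, the averaging idempotent $e_{0} = |G_{0}|^{-1} \sum_{\sigma \in G_{0}} \sigma$ lies in $\OK[G_{0}]$ and controls $G_{0}$-invariants cleanly on both tensor factors: the first becomes $\OLO[N_{p}]$ over the maximal unramified subextension $L_{0} = L^{G_{0}}$, while the second becomes a separable $\OLO$-order of prime-to-$p$ rank.

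The remaining $G/G_{0}$-invariance is then taken over the unramified extension $L_{0}/K$, which is precisely the situation handled in \cite[Theorem 1.2]{PJT_Integral}: on the $N_{p}$ factor one applies that argument (lifting a residue-field normal basis), while on the $N_{p'}$ factor one obtains a maximal $\OK$-order, over which fractional ideals are automatically free. These two pieces should then be assembled via the tensor decomposition into a free $\Lambda^{G}$-generator of $\B$, starting from a free $\OK[G]$-generator of $\B$---which exists since $L/K$ is tame and every fractional ideal of such a Galois extension of $p$-adic fields is $\OK[G]$-free (a theorem of Ullom extending Noether). The main obstacle is to verify that the tensor decomposition descends cleanly under the full $G$-action, i.e.\ that $\bigl( \OL[N_{p}] \otimes_{\OL} \OL[N_{p'}] \bigr)^{G}$ really is built from the $G$-invariants of the two factors in a way compatible with the action on $\B$; the key enabler is the semisimplicity of $\OK[G_{0}]$ afforded by tameness, which makes Maschke-style splittings available at each stage and lets us reconstruct a generator of $\B$ from generators of the $N_{p}$- and $N_{p'}$-pieces.
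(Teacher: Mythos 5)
Your reduction has the right skeleton --- the splitting $ N = T \times S $ into $p$-part and prime-to-$p$ part, the observation that the prime-to-$p$ piece sits inside a maximal (hence hereditary) order, and the passage to inertia invariants over the unramified subextension $ L_{0}/K $ are exactly the ingredients the paper uses. But the step you flag as ``the main obstacle'' is the actual content of the theorem, and the enabler you propose for it does not work. After taking $ G_{0} $-invariants one is left with the ring $ \Lambda^{G_{0}} = \bigl(\OL[S]^{G_{0}}\bigr)[T] $, a group ring of the $p$-group $ T $ over a maximal order, and one must show that the fractional ideal (realised as $ \Gamma^{G_{0}} $ with $ \Gamma = \mbox{Map}(G,\B) $) is projective over it. Maschke-style averaging over $ G_{0} $ is irrelevant to this: the obstruction to projectivity lives in the $ T $-direction, and $ |T| $ is a power of $ p $, hence not invertible in $ \OK $, so no idempotent $ |T|^{-1}\sum_{\tau \in T}\tau $ exists. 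The paper's substitute is a tame-trace element: from $ x \in \OL $ with $ \mathrm{Tr}_{L/K}(x)=1 $ one builds $ f \in \mbox{Map}(G,\OL)^{G_{0}} $ satisfying $ \theta_{T} \cdot f = 1 $ (where $ \theta_{T} = \sum_{\tau \in T}\tau $), and uses $ \psi(\gamma) = \sum_{\tau \in T} \tau \cdot \psi_{0}(f\tau^{-1}\cdot \gamma) $ to promote an $ \OL[S]^{G_{0}} $-splitting (which exists by maximality) to a $ \Lambda^{G_{0}} $-splitting. Without this idea nothing forces projectivity over the $ T $-part; note also that $ \B $ does not factor as a tensor product of a ``$T$-piece'' and an ``$S$-piece'', so there is no literal assembly of generators available.

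Two smaller problems. First, \cite[Theorem 1.2]{PJT_Integral} concerns $ \OL $ under the hypothesis that $ G_{0} $ acts trivially on \emph{all} of $ N $; here $ G_{0} $ may act nontrivially on $ S $, and the present statement is about an arbitrary fractional ideal $ \B $, so that theorem cannot simply be quoted on the $ N_{p} $ factor. Second, a free $ \OK[G] $-generator of $ \B $ (Ullom/Noether) gives no handle on a free $ \Lambda^{G} $-generator for a different regular group $ N $; the paper instead proves projectivity and upgrades it to freeness using the fact that $ \Lambda^{G} $ is a clean order in the commutative separable algebra $ H $ together with the normal basis theorem for Hopf--Galois structures ($ L $ is free over $ H $). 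You should replace the appeal to Ullom with that mechanism.
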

 
Since $ \AH(\B) $ is the only order in $ H $ over which $ \B $ can possibly be free \cite[Proposition 12.5]{ChTwe}, Theorem \ref{thm_G0_trivial_p_part} implies that $ \AH(\B) = \Lambda^{G} $ in this case. In section \ref{section_induced} we will use the theory of induced Hopf-Galois structures to show that in fact $ G_{0} $ always acts trivially on the $ p $-part of $ N $. Our first step in proving Theorem \ref{thm_G0_trivial_p_part} is the following:

\begin{prop} \label{prop_free_iff_projective}
$ \B $ is a free $\Lambda^{G} $-module if and only if it is a projective $ \Lambda^{G} $-module. 
\end{prop}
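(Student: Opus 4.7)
The forward implication is trivial, so I concentrate on the converse. My plan is to exploit commutativity of $\Lambda^{G}$ (which follows from that of $H$) to decompose it as a product of complete local rings, and then combine the elementary fact that a finitely generated projective module over a commutative local ring is free with the generic rank-one information supplied by the Hopf-Galois structure on $L$.

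Since $H$ is commutative, so is $\Lambda^{G} = \OL[N]^{G}$. It is a finitely generated $\OK$-module, hence complete with respect to the topology induced by the maximal ideal of $\OK$, so lifting the primitive idempotents of the Artinian quotient of $\Lambda^{G}$ modulo this ideal yields a decomposition $\Lambda^{G} = \prod_{i=1}^{r} R_{i}$ with each $R_{i}$ a complete local $\OK$-algebra. Correspondingly $\B = \prod_{i} \B_{i}$ with $\B_{i}$ a finitely generated $R_{i}$-module, and $H = \Lambda^{G} \otimes_{\OK} K = \prod_{i} H_{i}$ with $H_{i} = R_{i} \otimes_{\OK} K$.

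The natural $\Lambda^{G}$-linear multiplication map $\B \otimes_{\OK} K \to L$ is an isomorphism because $\B$ is a fractional ideal of $L$ and the $\Lambda^{G}$-action on $\B$ is simply the restriction of the Hopf-Galois action of $H$ on $L$. Since $L$ is free of rank $1$ over $H$ by definition of a Hopf-Galois structure, each $\B_{i} \otimes_{\OK} K$ is free of rank $1$ over $H_{i}$.

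Now assume $\B$ is projective over $\Lambda^{G}$. Each $\B_{i}$ is then finitely generated and projective over the commutative local ring $R_{i}$, hence free of some rank $n_{i}$; tensoring with $K$ forces $n_{i}=1$ for every $i$, so $\B_{i} \cong R_{i}$ as $R_{i}$-modules and hence $\B \cong \Lambda^{G}$ as $\Lambda^{G}$-modules. The only mildly delicate ingredient is the decomposition of $\Lambda^{G}$ into complete local factors, but this is standard for a finite commutative algebra over a complete discrete valuation ring.
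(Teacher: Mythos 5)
Your argument is correct, and it takes a more self-contained route than the paper. The paper's proof is shorter: it observes that $\Lambda^{G}$, being an $\OK$-order in a commutative separable $K$-algebra over a $p$-adic field, is a \emph{clean} order in the sense of Roggenkamp \cite[IX, Corollary 1]{Roggenkamp1970_II}, so that the projective module $\B$ is free over $\Lambda^{G}$ if and only if $K\otimes_{\OK}\B\cong L$ is free over $K\otimes_{\OK}\Lambda^{G}=H$, which holds by the normal basis theorem for Hopf-Galois structures \cite[Theorem 2.16]{ChTwe}. What you have done is essentially to reprove the relevant instance of cleanness from first principles: the idempotent-lifting decomposition of the finite commutative $\OK$-algebra $\Lambda^{G}$ into complete local factors $R_{i}$, the fact that a finitely generated projective module over a commutative local ring is free, and the rank computation via the invariant basis number property of the nonzero commutative rings $H_{i}$ (each $R_{i}$ is a nonzero $\OK$-torsion-free summand of $\Lambda^{G}$, so $H_{i}\neq 0$) together replace the black-box citation. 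Your version is more elementary and transparent; the paper's is shorter and defers the local analysis to the reference. One attribution to fix: the freeness of $L$ as a rank-one $H$-module is not part of the definition of a Hopf-Galois structure (the definition only requires the map $j:L\otimes_{K}H\to\mathrm{End}_{K}(L)$ to be bijective); it is the generalized normal basis theorem, which is exactly the result the paper cites at this point. With that correction your proof is complete.
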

\begin{proof}
If $ \B $ is a free $ \Lambda^{G} $-module then it is a projective $ \Lambda^{G} $-module. Conversely, suppose that $ \B $ is a projective $ \Lambda^{G} $-module, and observe that since $ \Lambda^{G} $ is an order in a commutative separable algebra over the $ p $-adic field $ K $, it is a clean order \cite[IX, Corollary 1]{Roggenkamp1970_II}. This means that the projective $ \Lambda^{G} $-module $ \B $ is free if and only if $ K \otimes_{\OK} \B $ is a free $ K \otimes_{\OK} \Lambda^{G} $-module \cite[IX, Theorem 1.2]{Roggenkamp1970_II}. We have $ K \otimes_{\OK} \B = L $ and  $ K \otimes_{\OK} \Lambda^{G} = H $, and by the generalization of the normal basis theorem to Hopf-Galois structures \cite[Theorem 2.16]{ChTwe} $ L $  is indeed a free $ H $-module. Therefore $ \B $ is a free $ \Lambda^{G} $-module. 
\end{proof}

Next we study the role of the inertia subgroup $ G_{0} $. To do this, we examine in a little more detail the process by which the group $ N $ yields a Hopf-Galois structure on $ L/K $, following the proof of the theorem of  Greither and Pareigis, as presented in \cite[\S 6]{ChTwe}. Since $ N $ is a regular subgroup of $ \perm{G} $ the group algebra $ L[N] $ gives a Hopf-Galois structure on the extension of rings $ M/L $, where  $ M = \mbox{Map}(G,L) $. To describe the action of $ N $ on $ M $, for each $ g \in G $ let $ u_{g} \in M $ be defined by $ u_{g}(h) = \delta_{g,h} $ for all $ h \in G $. Then $ \{ u_{g} \mid g \in G \} $ is a basis of orthogonal idempotents for $ M $, and $ N $ acts by permuting the subscripts: 
\[ \eta \cdot u_{g} = u_{\eta(g)} \mbox{ for } \eta \in N \mbox{ and } g \in G. \]
Since in addition $ N $ is normalized by $ \lambda(G) $, the group $ G $ acts on $ L[N] $ by acting on $ L $ as Galois automorphisms and on $ N $ by conjugation via the image of the embedding $ \lambda $ into $ \perm{G} $. It also acts on $ M $ by acting on $ L $ as Galois automorphisms and on the idempotents $ u_{g} $ by left translation of the subscripts. The action of $ L[N] $ on $ M $ is $ G $-equivariant with respect to these actions, and so by Galois descent we obtain that the $ K $-Hopf algebra $ L[N]^{G} $ gives a Hopf-Galois structure on the extension of rings $ M^{G} / K $. Finally, we may identify $ L $ with the fixed ring $ M^{G} $ via the $ K $-algebra isomorphism $ L \xrightarrow{\sim} M^{G} $ defined by
\begin{equation} \label{eqn_GP_isomorphism}
 x \mapsto f_{x} =  \sum_{g \in G} g(x) u_{g} \mbox{ for all } x \in L. 
 \end{equation}
Thus, the Hopf-Galois structure given by $ L[N]^{G} $ on $ M^{G}/K $ translates to one on $ L/K $, with the action of $ L[N]^{G} $ on $ L $ as given in equation \eqref{eqn_GP_action}. 
\\ \\
At integral level, the group ring $ \Lambda = \OL[N] $ acts on the maximal order $ \mbox{Map}(G,\OL) $ and so, if $ G_{F} $ is a subgroup of $ G $ with fixed field $ F $, the $ \OF $-algebra $ \Lambda^{G_{F}} $ acts on the $ \OF $-algebra $  \mbox{Map}(G,\OL)^{G_{F}} $. In particular, $ \Lambda^{G} $ acts on the $ \OK $-algebra $ \mbox{Map}(G,\OL)^{G} $, which is isomorphic to $ \OL $ as an $ \OK $-algebra and a $ \Lambda^{G} $-module.  Similarly, let $ \Gamma_{\B} = \mbox{Map}(G,\B) $; then $ \Gamma_{\B}^{G} $ is isomorphic to $ \B $ as a $ \Lambda^{G} $-module. For the rest of this section we study the structure of $ \Gamma_{\B}^{G} $ as a $ \Lambda^{G} $-module. Since the fractional ideal $ \B $ will be fixed throughout, we will usually suppress the subscript $ \B $ and write simply $ \Gamma $. With this notation, we have:

\begin{prop} \label{prop_G_to_G0}
$ \Gamma^{G} $ is a projective $ \Lambda^{G} $-module if and only if $ \Gamma^{G_{0}} $ is a projective $ \Lambda^{G_{0}} $-module.
\end{prop}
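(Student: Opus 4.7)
The plan is to interpose the maximal unramified subextension $ L_{0} = L^{G_{0}} $ and apply faithfully flat Galois descent along the unramified Galois extension $ L_{0}/K $, whose Galois group is $ \bar{G} := G/G_{0} $. Since $ L_{0}/K $ is unramified, $ \OLO/\OK $ is a $ \bar{G} $-Galois extension of rings in the Chase--Harrison--Rosenberg sense; in particular $ \OK \to \OLO $ is faithfully flat (indeed free of finite rank).

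First I would observe that, because $ G_{0} $ is normal in $ G $, the quotient $ \bar{G} $ acts on the $ G_{0} $-invariants $ \Lambda^{G_{0}} $ and $ \Gamma^{G_{0}} $, fixing precisely $ \Lambda^{G} $ and $ \Gamma^{G} $ respectively. Since the $ G $-action on $ \Lambda = \OL[N] $ is as Galois automorphisms on $ \OL $ and by conjugation on $ N $, and the $ G $-action on $ \Gamma = \mbox{Map}(G,\B) $ is as Galois automorphisms on $ \B $ and by left translation on the subscripts, $ \Lambda^{G_{0}} $ is naturally an $ \OLO $-algebra carrying a semilinear $ \bar{G} $-action, and $ \Gamma^{G_{0}} $ is a $ \Lambda^{G_{0}} $-module with a compatible semilinear $ \bar{G} $-action.

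Next I would apply Galois descent for the $ \bar{G} $-Galois extension $ \OLO/\OK $ to produce natural isomorphisms
\[
\Lambda^{G_{0}} \cong \OLO \otimes_{\OK} \Lambda^{G}, \qquad \Gamma^{G_{0}} \cong \OLO \otimes_{\OK} \Gamma^{G},
\]
of $ \OLO $-algebras and $ \Lambda^{G_{0}} $-modules respectively. In particular $ \Lambda^{G_{0}} $ is free of rank $ [L_{0}:K] $ over $ \Lambda^{G} $, so $ \Lambda^{G} \to \Lambda^{G_{0}} $ is faithfully flat, and $ \Lambda^{G_{0}} \otimes_{\Lambda^{G}} \Gamma^{G} \cong \Gamma^{G_{0}} $. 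Faithfully flat descent of projectivity for finitely presented modules then yields that $ \Gamma^{G} $ is projective over $ \Lambda^{G} $ if and only if $ \Gamma^{G_{0}} $ is projective over $ \Lambda^{G_{0}} $, which is exactly the desired equivalence.

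The main technical hurdle I anticipate is the bookkeeping needed to show that the descent isomorphisms really hold at the level of $ \Lambda^{G_{0}} $-modules (and not merely as $ \OLO $-modules with $ \bar{G} $-action): one must verify the semilinearity of the $ \bar{G} $-action on $ \Gamma^{G_{0}} $ compatibly with the $ \bar{G} $-action on $ \Lambda^{G_{0}} $, so that the descent equivalence upgrades from $ \OK $-modules to $ \Lambda^{G} $-modules. Once this is in place, the faithfully flat descent step is formal.
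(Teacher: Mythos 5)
Your proposal is correct and follows essentially the same route as the paper: both identify $\Lambda^{G}$ and $\Gamma^{G}$ as the $\bar{G}$-fixed points of $\Lambda^{G_{0}}$ and $\Gamma^{G_{0}}$, use Galois descent along the unramified ring extension $\OLO/\OK$ to obtain $\Lambda^{G_{0}} \cong \OLO \otimes_{\OK} \Lambda^{G}$ and $\Gamma^{G_{0}} \cong \OLO \otimes_{\OK} \Gamma^{G}$, and then descend projectivity along this faithfully flat base change (the paper cites Fr\"ohlich, \emph{Invariants for modules over commutative separable orders}, Lemma 5.1, for this last step). The only cosmetic difference is that you phrase the final step as general faithfully flat descent of projectivity rather than citing that specific lemma.
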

\begin{proof}
Let $ L_{0} $ denote the fixed field of $ G_{0} $, so that $ L_{0}/K $ is the maximal unramified subextension of $ L/K $, and let $ \bar{G} = G/G_{0}  \cong \Gal{L_{0}/K} $. Then we have $ \Lambda^{G} = \left( \Lambda^{G_{0}} \right)^{\bar{G}} $ and $ \Gamma^{G} = \left( \Gamma^{G_{0}} \right)^{\bar{G}} $ (with $ \bar{G} $ acting in the obvious way). Since $ L_{0}/K $ is unramified, the extension of rings $ \OLO / \OK $ is a Galois extension, and so by Galois descent the inverse to the fixed module functor (sending an $ \OLO $-module $ M $ to the $ \OK $-module $ M^{\bar{G}} $) is the base change functor (sending an $ \OK $-module $ M^{\prime} $ to the $ \OLO $-module $  \OLO \otimes_{\OK} M^{\prime} $). Therefore we have
\[ \OLO \otimes_{\OK} \left( \Lambda^{G_{0}} \right)^{\bar{G}} = \Lambda^{G_{0}} \mbox{ and } \OLO \otimes_{\OK} \left( \Gamma^{G_{0}} \right)^{\bar{G}} = \Gamma^{G_{0}}. \]
That is:
\[ \OLO \otimes_{\OK} \Lambda^{G} = \Lambda^{G_{0}} \mbox{ and } \OLO \otimes_{\OK} \Gamma^{G} = \Gamma^{G_{0}}. \]
Now by \cite[Lemma 5.1]{Frohlich_Invariants} $ \Gamma^{G} $ is a projective $ \Lambda^{G} $-module if and only if $ \OLO \otimes_{\OK} \Gamma^{G} $ is a projective $ \OLO \otimes_{\OK} \Lambda^{G} $-module. That is, if and only if $ \Gamma^{G_{0}} $ is a projective $ \Lambda^{G_{0}} $-module.
\end{proof}

In order to complete the proof of Theorem \ref{thm_G0_trivial_p_part}, it remains to show that if the action of $ G_{0} $ on the $ p $-part of $ N $ is trivial then $ \Gamma^{G_{0}} $ is a projective $ \Lambda^{G_{0}} $-module. We will accomplish this by showing that $ \Lambda^{G_{0}} $ takes a particular form in this case. Writing $ T $ for the $ p $-part of $ N $ and $ S $ for the prime-to-$p$-part of $ N $, we have:

\begin{prop} \label{prop_LambdaG0_group_ring}
Suppose that the action of $ G_{0} $ on $ T $ is trivial. Then 
\[ \Lambda^{G_{0}} = \left(\OL[S]^{G_{0}}\right)[T]. \]
\end{prop}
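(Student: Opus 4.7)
The plan is to decompose $ \Lambda = \OL[N] $ according to the canonical structure of the finite abelian group $ N $, and then verify that the $ G_{0} $-invariants respect this decomposition.

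Since $ N $ is finite abelian, it splits canonically as $ N = T \times S $, where $ T $ is its Sylow $ p $-subgroup and $ S $ is the complementary subgroup of elements of order prime to $ p $. Both $ T $ and $ S $ are characteristic in $ N $, so every group automorphism of $ N $ preserves each factor individually; in particular the conjugation action of $ \lambda(G) $ on $ N $ restricts to actions on $ T $ and $ S $ separately, and the corresponding $ G_{0} $-action on $ \Lambda $ preserves the group-ring decomposition $ \OL[N] = \OL[S][T] $.

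Writing each element of $ \Lambda $ uniquely as $ z = \sum_{t \in T} z_{t}\, t $ with $ z_{t} \in \OL[S] $, I would then unwind the $ G_{0} $-action: for $ g \in G_{0} $ the hypothesis gives $ g \cdot t = t $ for every $ t \in T $, while the induced action on $ \OL[S] $ is precisely the one used to form $ \OL[S]^{G_{0}} $. Hence
\[ g \cdot z = \sum_{t \in T} (g \cdot z_{t})\, t, \]
so $ z $ is fixed by $ G_{0} $ if and only if each coefficient $ z_{t} $ lies in $ \OL[S]^{G_{0}} $. This yields $ \Lambda^{G_{0}} = \left(\OL[S]^{G_{0}}\right)[T] $, as required.

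There is no serious obstacle here; the argument is essentially bookkeeping. The only point that requires genuine verification is that the $ G_{0} $-action on $ N $ preserves the $ p $-primary decomposition $ N = T \times S $, but this follows at once from the fact that $ T $ and $ S $ are characteristic subgroups of $ N $.
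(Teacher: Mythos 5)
Your argument is correct and follows essentially the same route as the paper: both identify $S$ and $T$ as $G$-stable (the paper via preservation of element orders under conjugation, you via their being characteristic in $N$), decompose $\OL[N] = \OL[S][T]$, and take $G_{0}$-invariants coefficientwise using the triviality of the action on $T$. Your version merely spells out the coefficient bookkeeping that the paper compresses into one line.
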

\begin{proof}
First note that the subgroups $ S,T $ are $ G $-stable, since for all $ g \in G $ and $ \eta \in N $ the permutations $ \eta $ and $ \lambda(g) \eta \lambda(g^{-1}) $ have the same order. We have $ N = S \times T  $ and $ \OL[N] = \OL[S][T] $, and since $ G_{0} $ acts trivially on $ T $ we have 
\[ \Lambda^{G_{0}} = \OL[N]^{G_{0}} = \OL[S][T]^{G_{0}} = \left(\OL[S]^{G_{0}}\right)[T]. \]
\end{proof}
Thus if $ G_{0} $ acts trivially on $ T $ then  $ \Lambda^{G_{0}} $ is a group ring of the finite $ p $-group $ T $ with coefficients in the ring $ \OL[S]^{G_{0}} $. This description of $ \Lambda^{G_{0}} $ will allow us to imitate the classical proof of the fact that $ \B $ is a projective $ \OK[G] $-module (see for example \cite[Proposition 1.3]{Ullom69}). In the classical situation, $ \OK $ is the maximal order in $ K $, so $ \B $ is certainly a projective (in fact, a free) $ \OK $-module. Therefore, if $ X $ is a free $ \OK[G] $ module and $ \varphi : X \rightarrow \B $ a surjective $ \OK[G] $-homomorphism, then there certainly exists an $ \OK $-homomorphism $ \psi_{0} : \B \rightarrow X $ that splits $ \varphi $. Since $ L/K $ is tame, there exists an element $ x \in \OL $ such that $ \mathrm{Tr}_{L/K}(x)=1 $, and multiplication by $ x $ is an $ \OK $-linear endomorphism of $ \B $. Therefore the map $ \psi : \B \rightarrow X $ defined by
\[ \psi(y) = \sum_{g \in G} g \psi_{0}(x  g^{-1}(y) ) \mbox{ for all } y \in \B \]
is an $ \OK[G] $-homomorphism, and splits $ \varphi $, which implies that $ \B $ is a projective $ \OK[G] $-module. 
\\ \\
In our imitation, $ \OL[S]^{G_{0}} $ will play a role analogous to that played by $ \OK $ in the classical proof:

\begin{prop} \label{prop_OLSG0_maximal}
$ \OL[S]^{G_{0}} $ is the unique maximal order in the group algebra $ L[S]^{G_{0}} $.
\end{prop}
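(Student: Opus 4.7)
The plan is to identify $\OL[S]^{G_{0}}$ as the integral closure of a Dedekind base ring in the commutative separable algebra $L[S]^{G_{0}}$, and then invoke the classical fact that in such an algebra over the fraction field of a Dedekind domain, the integral closure is the unique maximal order.

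First I would gather the structural features of the ambient algebra. Since $L$ has characteristic $0$ and $S$ is finite abelian, $L[S]$ is a commutative separable (hence semisimple) $L$-algebra, so it decomposes as a finite product of finite field extensions of $L$. The fixed ring $L[S]^{G_{0}}$ is therefore a commutative, reduced, finite-dimensional $L_{0}$-algebra (where $L_{0} = L^{G_{0}}$), and so it too is a product of fields. Hence it has a unique maximal $\OLO$-order, namely the integral closure of $\OLO$ in $L[S]^{G_{0}}$.

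Next I would use that $|S|$ is coprime to $p$, the residue characteristic of $\OL$, so $|S|$ is a unit in $\OL$. By the standard result on group rings over Dedekind domains (e.g.\ Reiner, \emph{Maximal Orders}), $\OL[S]$ is then a maximal $\OL$-order in $L[S]$, equivalently the integral closure of $\OL$ in $L[S]$.

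Now I would check that $\OL[S]^{G_{0}}$ coincides with the integral closure of $\OLO$ in $L[S]^{G_{0}}$. For the forward inclusion, any element of $\OL[S]^{G_{0}}$ is integral over $\OL$, and $\OL$ is integral over $\OLO$, so by transitivity it is integral over $\OLO$; it also lies in $L[S]^{G_{0}}$ by hypothesis. Conversely, if $x \in L[S]^{G_{0}}$ is integral over $\OLO$ then it is integral over $\OL$ (since $\OLO \subseteq \OL$), so it lies in the integral closure of $\OL$ in $L[S]$, which is $\OL[S]$; being $G_{0}$-fixed, it lies in $\OL[S]^{G_{0}}$. Combining with the previous paragraph, $\OL[S]^{G_{0}}$ is the unique maximal $\OLO$-order in $L[S]^{G_{0}}$.

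The only nontrivial ingredient is the maximality of $\OL[S]$ as an $\OL$-order when $|S|$ is coprime to $p$; this is standard but requires a clean citation. Everything else is a formal matter of integral closure and the elementary observation that fixed rings of commutative semisimple algebras remain commutative and reduced, so that the "uniqueness of the maximal order" statement applies.
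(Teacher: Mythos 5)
Your proposal is correct and follows essentially the same route as the paper's proof: use separability and commutativity to identify the unique maximal order with the integral closure, use $p \nmid |S|$ to see that $\OL[S]$ is the maximal $\OL$-order in $L[S]$, and then chase integrality over $\OLO$ versus $\OL$ together with $G_{0}$-invariance to identify $\OL[S]^{G_{0}}$ with the integral closure of $\OLO$ in $L[S]^{G_{0}}$. The only differences are cosmetic (you spell out the product-of-fields decomposition and transitivity of integrality, where the paper cites \cite[Proposition 4.2]{PJT_Towards} and \cite[Proposition 27.1]{MORT_I}).
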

\begin{proof}
The argument of \cite[Proposition 4.2]{PJT_Towards} essentially applies. The group algebra $ L[S] $ is separable and commutative, and therefore contains a unique maximal $ \OL $-order, which is equal to $ \OL[S] $ since $ p \ndivides |S| $ \cite[Proposition 27.1]{MORT_I}. Similarly, the $ L_{0} $-algebra $ L[S]^{G_{0}} $ contains a unique maximal $ \OLO $-order $ \M  $, which is the integral closure of $ \OLO $ in $ L[S]^{G_{0}} $. Clearly $ \OL[S]^{G_{0}}\subseteq \M $, but if $ z \in \M $ then $ z $ is integral over $ \OLO $, hence integral over $ \OL $, so $ z \in \OL[S] $. In addition, $ z $ is fixed by every element of $ G_{0} $, so in fact $ z \in \OL[S]^{G_{0}} $. Therefore $ \M = \OL[S]^{G_{0}} $.
\end{proof}

Since $ \OL[S]^{G_{0}} $ is a maximal order, it is hereditary, and so $ \Gamma^{G_{0}} $ is certainly a projective $ \OL[S]^{G_{0}} $-module. Therefore, if $ X $ is a free $ \Lambda^{G_{0}} $-module and $ \varphi : X \rightarrow \Gamma^{G_{0}} $ is a surjective $ \Lambda^{G_{0}} $ homomorphism, there exists an $ \OL[S]^{G_{0}} $-homomorphism $ \psi_{0} : \Gamma^{G_{0}} \rightarrow X $ that splits $ \varphi $. As in the proof of the classical result, we will construct a $ \Lambda^{G_{0}} $-homomorphism $ \psi : \Gamma^{G_{0}} \rightarrow X $ that splits $ \varphi $. To do this, we require an element $ f \in \mbox{Map}(G,\OL)^{G_{0}} $ analogous to the element $ x \in \OL $ of trace $ 1 $ in the proof of the classical result. Let
\[ \theta_{N} = \sum_{\eta \in N} \eta, \hspace{2mm}  \theta_{S} = \sum_{\sigma \in S} \sigma,   \mbox{ and } \theta_{T} = \sum_{\tau \in T} \tau. \]
It is clear that $ \theta_{N}, \theta_{S}, \theta_{T} \in \Lambda^{G} \subseteq \Lambda^{G_{0}} $ and that $ \theta_{N} = \theta_{S}\theta_{T} $.

\begin{prop} \label{prop_element_f_analogous_to_x}
There exists $ f \in  \mbox{Map}(G,\OL)^{G_{0}} $ such that $ \theta_{T} \cdot f = 1 $. 
\end{prop}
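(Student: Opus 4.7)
The plan is to construct the desired $f$ by averaging, over $G_{0}$, a function supported on a transversal for the $T$-orbits, mirroring the classical trace-one construction recalled just before the statement. Two preliminary facts are needed. First, since $L/L_{0}$ is totally tamely ramified, the standard tame-trace argument produces $x \in \OL$ with $\sum_{\sigma \in G_{0}} \sigma(x) = \mathrm{Tr}_{L/L_{0}}(x) = 1$. Second, $G_{0}$ permutes the set of $T$-orbits of $G$: indeed $T$ is the Sylow $p$-subgroup of the abelian group $N$, hence characteristic in $N$, and since $\lambda(G)$ normalises $N$, conjugation by $\lambda(\sigma)$ preserves $T$ for every $\sigma \in G$; then for $\tau \in T$ and $g \in G$ one has $\sigma\tau(g) = \tau'(\sigma g)$ with $\tau' = \lambda(\sigma)\tau\lambda(\sigma^{-1}) \in T$, so $\sigma \cdot \mathcal{O}$ is a $T$-orbit of $G$ whenever $\mathcal{O}$ is.

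Fix a set $R \subseteq G$ of representatives for the $T$-orbits of $G$ and define $\tilde f \in \mbox{Map}(G,\OL)$ by $\tilde f(g) = x$ if $g \in R$ and $\tilde f(g) = 0$ otherwise. Set
$$f(g) = \sum_{\sigma \in G_{0}} \sigma\bigl(\tilde f(\sigma^{-1}g)\bigr) \qquad (g \in G).$$
Each summand lies in $\OL$, so $f \in \mbox{Map}(G,\OL)$. The change of variable $\sigma \mapsto \sigma_{0}\sigma$ in the defining sum shows $\sigma_{0} \cdot f = f$ for every $\sigma_{0} \in G_{0}$, so $f \in \mbox{Map}(G,\OL)^{G_{0}}$.

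To verify $\theta_{T} \cdot f = 1$, unwind the action on the idempotent basis of $\mbox{Map}(G,\OL)$: for $h \in G$, $(\theta_{T} \cdot f)(h) = \sum_{g \in T \cdot h} f(g)$. Interchanging summations,
$$(\theta_{T} \cdot f)(h) = \sum_{\sigma \in G_{0}} \sigma\!\left(\sum_{g \in T \cdot h} \tilde f(\sigma^{-1}g)\right).$$
By the second preliminary fact, $\sigma^{-1}(T \cdot h)$ is itself a $T$-orbit of $G$, so it contains a unique element of $R$; the inner sum therefore equals $x$, and $(\theta_{T} \cdot f)(h) = \sum_{\sigma \in G_{0}} \sigma(x) = 1$ independently of $h$, as required. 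The only delicate step is the identification of $\sigma \cdot \mathcal{O}$ as a $T$-orbit, which is precisely where the commutativity of $H$ (abelianness of $N$) is used; everything else is direct averaging.
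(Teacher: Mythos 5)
Your proof is correct, but it takes a genuinely different route from the paper's. The paper starts from $ x \in \OL $ with $ \mathrm{Tr}_{L/K}(x)=1 $, passes to the corresponding element $ f_{x} = \sum_{g \in G} g(x)u_{g} \in \mbox{Map}(G,\OL)^{G} $, computes $ \theta_{N} \cdot f_{x} = 1 $ using only the regularity of $ N $ on $ G $, and then simply sets $ f = \theta_{S} \cdot f_{x} $, so that $ \theta_{T} \cdot f = \theta_{N} \cdot f_{x} = 1 $. You instead work directly with the $ T $-orbits of $ G $: you average over $ G_{0} $ a function supported on an orbit transversal with value a trace-one element for $ L/L_{0} $, and your verification that each $ \lambda(\sigma) $ permutes the $ T $-orbits (via $ T $ being characteristic in the abelian group $ N $ and $ \lambda(G) $ normalizing $ N $) is sound, as is the orbit-counting computation of $ (\theta_{T}\cdot f)(h) $. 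Your argument is more hands-on and arguably illuminates \emph{why} the statement is true (one trace-one contribution per $ T $-orbit), and it only invokes tameness of $ L/L_{0} $ rather than of $ L/K $. However, be aware that the paper's specific choice $ f = \theta_{S}\cdot f_{x} $ buys an extra property that is used immediately afterwards: since $ \theta_{S} $ is an integral of $ \OL[S] $, one gets $ z \cdot f = \varepsilon(z) f $ for all $ z \in \OL[S] $, and this is exactly what makes multiplication by $ f $ an $ \OL[S]^{G_{0}} $-endomorphism of $ \Gamma^{G_{0}} $ in Proposition \ref{prop_mult_by_f}. Your $ f $ depends on the choice of transversal $ R $ and is not visibly $ S $-invariant, so while it proves the proposition as literally stated, it could not be substituted into the rest of the argument without either proving $ \sigma \cdot f = f $ for $ \sigma \in S $ separately or replacing $ f $ by $ \theta_{S} $ applied to something.
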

\begin{proof}
Since $ L/K $ is tame, there exists $ x \in \OL $ such that $ \mathrm{Tr}_{L/K}(x)=1 $, and this element corresponds under the isomorphism in \eqref{eqn_GP_isomorphism} to the element 
\[ f_{x} = \sum_{g \in G} g(x) u_{g} \in  \mbox{Map}(G,\OL)^{G} \subseteq  \mbox{Map}(G,\OL)^{G_{0}}. \]
Note that
\[ \theta_{N} \cdot f_{x} = \sum_{\eta \in N} \eta  \cdot \left(\sum_{g \in G} g(x) u_{g}\right) = \sum_{\eta \in N} \sum_{g \in G} g(x) u_{\eta(g)} = \sum_{g \in G} \left( \sum_{\eta \in N} \eta^{-1}(g)(x) \right) u_{g} = \sum_{g \in G} u_{g} = 1, \]
since $ N $ is regular on $ G $ and $ \mathrm{Tr}_{L/K}(x)=1 $. Now let $ f = \theta_{S}  \cdot f_{x} \in \mbox{Map}(G,\OL)^{G_{0}} $. Then we have
\[ \theta_{T} \cdot  f = \theta_{T} \cdot (\theta_{S} \cdot f_{x}) = (\theta_{T}\theta_{S}) \cdot f_{x} = \theta_{N}  \cdot f_{x} = 1. \]
\end{proof}

We can verify that $ f $ does possess the properties we require for our imitation of the classical proof:

\begin{prop} \label{prop_mult_by_f}
Multiplication by $ f $ is an $ \OL[S]^{G_{0}} $-endomorphism of $ \Gamma^{G_{0}} $.
\end{prop}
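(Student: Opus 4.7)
The statement has two things to check: first, that multiplication by $f$ sends $\Gamma^{G_{0}}$ into itself, and second, that it commutes with the action of $\OL[S]^{G_{0}}$. The plan is to exploit two facts about $M = \mbox{Map}(G,\OL)$: it is a commutative ring under pointwise multiplication, and both $N$ (via its permutation of the idempotents $u_{g}$) and $G$ act on $M$ as ring automorphisms, with the $\Lambda$-action $G$-equivariant.

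For the first point, since $f \in \mbox{Map}(G,\OL)$ and $y \in \Gamma = \mbox{Map}(G,\B)$, the pointwise product $fy$ takes values in $\OL\cdot\B = \B$, so $fy \in \Gamma$. Moreover $f$ and $y$ are both fixed by $G_{0}$, and $G_{0}$ acts on $M$ as ring automorphisms, so $fy \in \Gamma^{G_{0}}$.

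For the second point, the essential step is to show that $\sigma \cdot f = f$ for every $\sigma \in S$. Using associativity of the $\Lambda$-action together with the definition $f = \theta_{S}\cdot f_{x}$, I compute
\[ \sigma \cdot f = (\sigma\theta_{S})\cdot f_{x} = \theta_{S}\cdot f_{x} = f, \]
since left multiplication by $\sigma \in S$ merely permutes the summands of $\theta_{S}$. Given this, take any $z = \sum_{\sigma \in S} c_{\sigma}\sigma \in \OL[S]^{G_{0}}$ and any $y \in \Gamma^{G_{0}}$. Since each $\sigma \in N$ acts on $M$ as an $L$-algebra automorphism,
\[ \sigma \cdot (fy) = (\sigma \cdot f)(\sigma \cdot y) = f\,(\sigma \cdot y), \]
and as elements of $L$ act on $M$ by scalar multiplication (which coincides with multiplication by the associated constant map and so commutes with multiplication by $f$), summing against the $c_{\sigma}$ yields
\[ z \cdot (fy) = \sum_{\sigma \in S} c_{\sigma}\, f\,(\sigma \cdot y) = f \cdot (z \cdot y). \]

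The only real obstacle is the verification that $S$ fixes $f$; once that is established, everything else is mechanical bookkeeping from the $L[N]$-module-algebra axioms. It is worth noting that although the analogous calculation with $T$ in place of $S$ would fail (because $T$ does not in general fix $f_{x}$ once multiplied by $\theta_{S}$), we only need $S$-fixedness here, which is exactly what the construction $f = \theta_{S}\cdot f_{x}$ was designed to provide.
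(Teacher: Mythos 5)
Your proof is correct and follows essentially the same route as the paper's: both reduce the claim to showing $\sigma \cdot f = f$ for all $\sigma \in S$ (via $\sigma\theta_{S} = \theta_{S}$, i.e.\ the integral property of $\theta_{S}$) and then apply the module-algebra axiom $\sigma\cdot(f\gamma) = (\sigma\cdot f)(\sigma\cdot\gamma)$ for grouplike elements, summing against the coefficients $c_{\sigma}$. Your opening verification that $f\gamma$ lands in $\Gamma^{G_{0}}$ is just a spelled-out version of the paper's observation that $\Gamma^{G_{0}}$ is an ideal of $\mbox{Map}(G,\OL)^{G_{0}}$.
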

\begin{proof}
Multiplication by $ f $ is an endomorphism of $ \Gamma^{G_{0}} $ because $ \Gamma^{G_{0}} $ is an ideal of $ \mbox{Map}(G,\OL)^{G_{0}} $. To show that it is $ \OL[S]^{G_{0}} $-linear, it is sufficient to show that multiplication by $ f $ is an $ \OL[S] $-endomorphism of $ \Gamma $. Let $ z \in \OL[S] $, and write $ z = \sum_{\sigma \in S} c_{\sigma} \sigma $ with $ c_{\sigma} \in \OL $. Note that $ \Delta(z) = \sum_{\sigma \in S} c_{\sigma} \sigma \otimes \sigma $, and that since $ \theta_{S} $ is an integral of the group ring $ \OL[S] $ we have
\[ z \cdot f = z \cdot ( \theta_{S} \cdot f_{x} ) = (z \theta_{S}) \cdot  f_{x} = (\varepsilon(z) \theta_{S}) \cdot  f_{x} = \varepsilon(z) f, \]
where $ \varepsilon $ denotes the counit map of $ L[S] $; in particular, $ \sigma \cdot  f = f $ for all $ \sigma \in S $. Now for all $ \gamma \in \Gamma $ we have:
\begin{eqnarray*}
z \cdot (f \gamma) & = & \sum_{\sigma \in S} c_{\sigma} (\sigma \cdot  f) (\sigma \cdot \gamma) \mbox{ since } M \mbox{ is an } L[S] \mbox{-module algebra} \\
& = & \sum_{\sigma \in S} c_{\sigma} f (\sigma  \cdot \gamma) \mbox{ since } \sigma  \cdot f = f \mbox{ for all } \sigma \in S \\
& = & f \sum_{\sigma \in S} c_{\sigma} (\sigma \cdot \gamma) \\
& = & f (z \cdot \gamma).
\end{eqnarray*}
Therefore multiplication by $ f $ is an $ \OL[S] $-endomorphism of $ \Gamma $, and so multiplication by $ f $ is an $ \OL[S]^{G_{0}} $-endomorphism of $ \Gamma^{G_{0}} $.
\end{proof}

Hence, assuming the action of $ G_{0} $ on $ T $ is trivial, $ f $ can be used to extend an $ \OL[S]^{G_{0}} $-homomorphism to a $ \Lambda^{G_{0}} $-homomorphism:

\begin{prop} \label{prop_Lambda_G_0_hom}
Suppose that the action of $ G_{0} $ on $ T $ is trivial. Let $ X $ be a $ \Lambda^{G_{0}} $-module and $ \psi_{0} : \Gamma^{G_{0}} \rightarrow X $ an $ \OL[S]^{G_{0}} $-homomorphism. Define $ \psi : \Gamma^{G_{0}} \rightarrow X $ by
\[ \psi(\gamma) = \sum_{\tau \in T} \tau \cdot \psi_{0}( f \tau^{-1} \cdot \gamma ). \]
Then $ \psi $ is a $ \Lambda^{G_{0}} $-homomorphism.
\end{prop}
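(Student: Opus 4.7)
The plan is to apply Proposition~\ref{prop_LambdaG0_group_ring}, which identifies $\Lambda^{G_0}$ with $(\OL[S]^{G_0})[T]$. Consequently each element of $\Lambda^{G_0}$ is an $\OL[S]^{G_0}$-linear combination of elements of $T$, so it suffices to verify that $\psi$ is additive, $\OL[S]^{G_0}$-linear, and commutes with the action of each $\tau_0 \in T$. Additivity is immediate. Before addressing the other two properties I would observe that the definition of $\psi$ makes sense: the hypothesis that $G_0$ acts trivially on $T$ means each $\tau \in T$ commutes with the $G_0$-action on $\Gamma$ and therefore preserves $\Gamma^{G_0}$, while $T$ acts on $X$ through its $\Lambda^{G_0}$-module structure.

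For the $T$-equivariance, I would fix $\tau_0 \in T$ and expand
\[ \tau_0 \cdot \psi(\gamma) = \sum_{\tau \in T} (\tau_0 \tau) \cdot \psi_0\bigl(f (\tau^{-1} \cdot \gamma)\bigr). \]
Substituting $\sigma = \tau_0 \tau$ gives $\tau^{-1} = \sigma^{-1}\tau_0$, and since $T$ acts on $\Gamma$ as a group, $(\sigma^{-1}\tau_0)\cdot\gamma = \sigma^{-1}\cdot(\tau_0\cdot\gamma)$; the sum then rewrites as $\psi(\tau_0 \cdot \gamma)$.

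For the $\OL[S]^{G_0}$-linearity, take $z \in \OL[S]^{G_0}$ and examine $\psi(z \cdot \gamma)$ term by term. The crucial point is that $N = S \times T$ is abelian, so $z$ commutes with every $\tau \in T$ in its action on both $\Gamma$ and $X$. This lets me rewrite $\tau^{-1}\cdot(z\cdot\gamma) = z\cdot(\tau^{-1}\cdot\gamma)$; Proposition~\ref{prop_mult_by_f} then passes $z$ through multiplication by $f$; the $\OL[S]^{G_0}$-linearity of $\psi_0$ passes it through $\psi_0$; and commuting $z$ past $\tau$ on $X$ lets me factor $z$ out of the sum, yielding $\psi(z \cdot \gamma) = z \cdot \psi(\gamma)$.

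The argument is essentially bookkeeping, and I anticipate no serious obstacle. The one thing to watch is keeping straight the three distinct facts chained together in the linearity calculation: commutativity of $S$ and $T$ as subgroups of the abelian $N$, Proposition~\ref{prop_mult_by_f}, and the hypothesized $\OL[S]^{G_0}$-linearity of $\psi_0$. The triviality of the $G_0$-action on $T$ itself enters only to ensure that the definition of $\psi$ is well-posed.
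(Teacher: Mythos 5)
Your proposal is correct and follows essentially the same route as the paper: both reduce via Proposition~\ref{prop_LambdaG0_group_ring} to checking compatibility with generators of $\Lambda^{G_{0}} = (\OL[S]^{G_{0}})[T]$, and both chain together the commutativity of $N$, Proposition~\ref{prop_mult_by_f}, the $\OL[S]^{G_{0}}$-linearity of $\psi_{0}$, and a reindexing of the sum over $T$. The only cosmetic difference is that you treat the $\OL[S]^{G_{0}}$-linearity and the $T$-equivariance as two separate checks, whereas the paper verifies $\psi((z\upsilon)\cdot\gamma) = (z\upsilon)\cdot\psi(\gamma)$ in a single computation.
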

\begin{proof}
Since the action of $ G_{0} $ on $ T $ is trivial, we have $ \Lambda^{G_{0}} = \OL[S]^{G_{0}}[T] $ by Proposition \ref{prop_LambdaG0_group_ring}. It is therefore is sufficient to show that 
\[ \psi( (z \upsilon) \cdot \gamma ) = (z \upsilon) \cdot  \psi(\gamma) \mbox{ for all } z \in \OL[S]^{G_{0}}, \hspace{2mm} \upsilon \in T, \hspace{2mm} \gamma \in \Gamma^{G_{0}}. \]
For such $ z, \upsilon $ and $ \gamma $, we have
\begin{eqnarray*}
\psi( (z \upsilon)  \cdot \gamma ) & = & \sum_{\tau \in T} \tau \cdot \psi_{0}( f \tau^{-1} (z \upsilon) \cdot \gamma ) \\
& = & \sum_{\tau \in T} z \tau \cdot \psi_{0}( f \tau^{-1} \upsilon \cdot \gamma ) \\
& = & \sum_{\tau \in T} z \upsilon \tau \cdot \psi_{0}( f \tau^{-1} \cdot \gamma ) \mbox{ (replacing $\tau $ with $ \upsilon^{-1}\tau $)} \\
& = & (z\upsilon) \cdot \psi(\gamma).
\end{eqnarray*}
(The second equality holds because multiplication by $ f $ is $ \OL[S]^{G_{0}} $-linear (by Proposition \ref{prop_mult_by_f}), and $ \psi_{0} $ is an $ \OL[S]^{G_{0}} $-homomorphism.) Thus  $ \psi $ is a $ \Lambda^{G_{0}} $-homomorphism.
\end{proof}

Now, completing our imitation of the classical proof, we have:

\begin{prop}\label{prop_GammaG0_proj}
Suppose that the action of $ G_{0} $ on $ T $ is trivial. Then $ \Gamma^{G_{0}} $ is a projective $ \Lambda^{G_{0}} $-module. 
\end{prop}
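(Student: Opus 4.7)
The plan is to complete the imitation of the classical projectivity proof sketched after Proposition \ref{prop_LambdaG0_group_ring}, with the subring $\OL[S]^{G_{0}} \subseteq \Lambda^{G_{0}}$ playing the role of $\OK \subseteq \OK[G]$ and the $ p $-group $ T $ playing the role of the Galois group. Concretely, I start by picking a free $ \Lambda^{G_{0}} $-module $ X $ and a surjective $ \Lambda^{G_{0}} $-homomorphism $ \varphi : X \rightarrow \Gamma^{G_{0}} $. Since $ \OL[S]^{G_{0}} $ is the maximal order in the separable commutative algebra $ L[S]^{G_{0}} $ by Proposition \ref{prop_OLSG0_maximal}, it is hereditary; hence $ \Gamma^{G_{0}} $, viewed as an $ \OL[S]^{G_{0}} $-submodule of the finitely generated module $ \mathrm{Map}(G,\OL)^{G_{0}} $, is a projective $ \OL[S]^{G_{0}} $-module, so there exists an $ \OL[S]^{G_{0}} $-linear splitting $ \psi_{0} : \Gamma^{G_{0}} \rightarrow X $ of $ \varphi $.

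Next I average $ \psi_{0} $ using the trace-like element $ f $ of Proposition \ref{prop_element_f_analogous_to_x}: Proposition \ref{prop_Lambda_G_0_hom} shows that
\[ \psi(\gamma) = \sum_{\tau \in T} \tau \cdot \psi_{0}(f \tau^{-1} \cdot \gamma) \]
is a $ \Lambda^{G_{0}} $-homomorphism $ \Gamma^{G_{0}} \rightarrow X $. It then remains to check that $ \varphi \circ \psi = \mathrm{id}_{\Gamma^{G_{0}}} $, and this is the one place where the particular form of $ f $ really enters. Using that $ \varphi $ is $ \Lambda^{G_{0}} $-linear and that $ \varphi \circ \psi_{0} = \mathrm{id} $, together with the fact that each $ \tau \in T \subseteq N $ is group-like so that $ \tau \cdot (f y) = (\tau \cdot f)(\tau \cdot y) $ in the $ L[N] $-module algebra $ M $, I compute
\[ \varphi(\psi(\gamma)) = \sum_{\tau \in T} \tau \cdot \bigl(f (\tau^{-1} \cdot \gamma)\bigr) = \sum_{\tau \in T} (\tau \cdot f)\, \gamma = (\theta_{T} \cdot f)\, \gamma = \gamma, \]
the last equality by Proposition \ref{prop_element_f_analogous_to_x}. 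Hence $ \psi $ splits $ \varphi $, so $ \Gamma^{G_{0}} $ is a direct summand of the free module $ X $ and is therefore projective.

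The main obstacle has already been discharged by the preceding propositions: producing the ``trace-one'' element $ f \in \mathrm{Map}(G,\OL)^{G_{0}} $ with $ \theta_{T} \cdot f = 1 $, and verifying that the averaging formula lands in $ \Lambda^{G_{0}} $-homomorphisms. Once these are in hand, the proof becomes a routine verification that the two facts $ \tau \cdot (fy) = (\tau \cdot f)(\tau \cdot y) $ and $ \theta_{T} \cdot f = 1 $ combine to make the sum collapse to the identity.
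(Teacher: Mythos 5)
Your proof is correct and follows essentially the same route as the paper: take a surjection $\varphi$ from a free $\Lambda^{G_{0}}$-module, split it $\OL[S]^{G_{0}}$-linearly using the maximality (hence hereditariness) of $\OL[S]^{G_{0}}$ from Proposition \ref{prop_OLSG0_maximal}, average with $f$ via Proposition \ref{prop_Lambda_G_0_hom}, and collapse the sum using $\theta_{T} \cdot f = 1$ from Proposition \ref{prop_element_f_analogous_to_x}. Your explicit justification of the group-like identity $\tau \cdot (fy) = (\tau \cdot f)(\tau \cdot y)$ matches the step the paper performs implicitly in its final computation.
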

\begin{proof}
Let $ X $ be a free $ \Lambda^{G_{0}} $-module, and $ \varphi : X \rightarrow \Gamma^{G_{0}} $ a surjective $ \Lambda^{G_{0}} $-homomorphism. By Proposition \ref{prop_OLSG0_maximal} $ \OL[S]^{G_{0}} $ is the unique maximal $ \OLO $-order in $ L[S]^{G_{0}} $, and so there exists an $ \OL[S]^{G_{0}} $-homomorphism $ \psi_{0} : \Gamma^{G_{0}} \rightarrow X $ that splits $ \varphi $. Now define $ \psi : \Gamma^{G_{0}} \rightarrow X $ by
\[ \psi(\gamma) = \sum_{\tau \in T} \tau\cdot  \psi_{0}( f \tau^{-1} \cdot \gamma ). \]
By Proposition \eqref{prop_Lambda_G_0_hom}, $ \psi $ is a $ \Lambda^{G_{0}} $-homomorphism, and for all $ \gamma \in \Gamma^{G_{0}} $ we have
\begin{eqnarray*}
\varphi \psi ( \gamma ) & = & \sum_{\tau \in T} \varphi(\tau \cdot \psi(f \tau^{-1} \cdot \gamma)) \\
& = & \sum_{\tau \in T} \tau \cdot \varphi(\psi(f \tau^{-1} \cdot \gamma)) \\
& = & \sum_{\tau \in T} \tau \cdot ( f \tau^{-1} \cdot \gamma)) \\
& = & \sum_{\tau \in T} (\tau \cdot f) \gamma \\
& = & \left( \theta_{T}  f \right) \cdot \gamma \\
& = & \gamma, 
\end{eqnarray*}
by Proposition \ref{prop_element_f_analogous_to_x}. Therefore $ \psi $ is a $ \Lambda^{G_{0}} $-homomorphism splitting $ \varphi $, and so $ \Gamma^{G_{0}} $ is a projective $ \Lambda^{G_{0}} $ module.
\end{proof}

Combining the results of this section, we now have:

\begin{proof}[Proof of Theorem \ref{thm_G0_trivial_p_part}.]
By Proposition \ref{prop_free_iff_projective}, it is sufficient to show that $ \B $ is a projective $ \Lambda^{G} $-module and, since $ \B \cong \Gamma^{G} $ as $ \Lambda^{G} $-modules, this is equivalent to showing that $ \Gamma^{G} $ is a projective $ \Lambda^{G} $-module. By Proposition \ref{prop_G_to_G0} this is equivalent to showing that $ \Gamma^{G_{0}} $ is a projective $ \Lambda^{G_{0}} $-module, and by Proposition \ref{prop_GammaG0_proj} this is true. Therefore $ \B $ is a free $ \Lambda^{G} $-module. 
\end{proof}

\section{Induced Hopf-Galois structures on tame extensions} \label{section_induced}

In \cite[Theorem 3]{Crespo_Induced} Crespo, Rio and Vela show that if  $ L/K $ is a Galois extension of fields with group $ G $, $ F/K $ is a subextension, and $ G_{F} =  \Gal{L/F} $ has a normal complement in $ G $ then, given Hopf-Galois structures on $ F/K $ and $ L/F $ with underlying groups $ S, T $ respectively, we can induce a Hopf-Galois structure on $ L/K $ whose underlying group is $ S \times T $. Conversely, they consider a Hopf-Galois structure on $ L/K $ whose underlying group $ N $ is the direct product of two $ G $-stable subgroups  $ S, T $. In this situation $ L[T]^{G} $ is a $ K $-Hopf subalgebra of $ H $, and we may consider its fixed field:
\[ L^{T} = \{ x \in L \mid z \cdot x = \varepsilon(z)x \mbox{ for all } z \in L[T]^{G} \}. \]
As in classical Galois theory we have $ [L:L^{T}] = |T| $.  Under the assumption that $ \Gal{L/L^{T}} $ has a normal complement in $ G $, \cite[Theorem 9]{Crespo_Induced} asserts that there are Hopf-Galois structures on $ L^{T}/K $ and $ L/L^{T} $ having underlying groups $ S $ and $ T $ respectively, and that the original Hopf-Galois structure is induced from these. In this section we shall show that every commutative Hopf-Galois structure on a tame Galois extension of $ p $-adic fields is induced, and deduce that in this situation the hypotheses of Theorem \ref{thm_G0_trivial_p_part} are always satisfied. 
\\ \\
From the proof of \cite[Theorem 3]{Crespo_Induced} we can extract the following:

\begin{prop} \label{prop_induced_complement_trivial_action}
Let $ L/K $ be a Galois extension of fields and $ F/K $ a subextension such that $ G_{F} = \Gal{L/F} $  has a normal complement $ C $ in $ G $. Suppose that there are Hopf-Galois structures on $ F/K $ and $ L/F $, with underlying groups $ S $ and $ T $ respectively, so that the Hopf-Galois structure on $ L/K $ induced by these has underlying group $ N =S \times T $. Then the action of $ C $ on $ T $ is trivial. 
\end{prop}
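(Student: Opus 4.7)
The plan is to use the explicit construction of the induced Hopf-Galois structure given in the proof of \cite[Theorem 3]{Crespo_Induced} and show that $ \lambda(C) $ and $ T $, as subgroups of $ \perm{G} $, act on ``disjoint coordinates" of $ G $ and therefore commute with one another; since $ C $ acts on $ T $ via conjugation by $ \lambda $, triviality of this action then follows at once.

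To set this up, I would first use the normal complement to write every element of $ G $ uniquely as $ c\gamma $ with $ c \in C $ and $ \gamma \in G_{F} $, giving a set-theoretic bijection $ G \leftrightarrow C \times G_{F} $. Under this identification, for any $ c^{\prime} \in C $ the left translation $ \lambda(c^{\prime}) $ acts by $ \lambda(c^{\prime})(c\gamma) = (c^{\prime}c)\gamma $, which preserves the $ G_{F} $-component (because $ C $ is a subgroup). On the other hand, the factor $ T $ of $ N = S \times T $ is obtained from the chosen Hopf-Galois structure on $ L/F $ as a regular subgroup of $ \perm{G_{F}} $, and the construction of \cite{Crespo_Induced} embeds $ T $ into $ \perm{G} $ by the rule $ t(c\gamma) = c \cdot t(\gamma) $, which preserves the $ C $-component.

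The conjugation action of $ \lambda(c^{\prime}) $ on $ t $ is then a direct calculation: for every $ g = c\gamma \in G $,
\[ \lambda(c^{\prime}) t \lambda(c^{\prime})^{-1}(c\gamma) = \lambda(c^{\prime})\bigl( t\bigl( (c^{\prime -1}c)\gamma \bigr) \bigr) = \lambda(c^{\prime})\bigl( (c^{\prime -1}c) t(\gamma) \bigr) = c \cdot t(\gamma) = t(c\gamma), \]
using that $ c^{\prime -1} c \in C $ together with the action formula for $ T $. Hence $ \lambda(c^{\prime}) t \lambda(c^{\prime})^{-1} = t $ for every $ c^{\prime} \in C $ and every $ t \in T $, which is precisely the assertion that $ C $ acts trivially on $ T $.

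The main obstacle is bookkeeping, namely verifying from the induction construction of \cite{Crespo_Induced} that $ T $ really is embedded in $ \perm{G} $ by the coordinate-wise formula $ t(c\gamma) = c \cdot t(\gamma) $, so that its action is blind to the $ C $-component. Once this is extracted, the commutation calculation above is immediate from the fact that $ C $ is a subgroup, making left multiplication by $ c^{\prime} $ preserve the $ G_{F} $-coordinate of the decomposition.
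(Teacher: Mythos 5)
Your proposal is correct and follows essentially the same route as the paper: the paper likewise writes each element of $G$ uniquely as a product of an element of $C$ and an element of $G_F$ (there denoted $x_iy_j$), extracts from the construction in \cite[Theorem 3]{Crespo_Induced} that $\lambda(C)$ moves only the $C$-coordinate while $T$ moves only the $G_F$-coordinate, and then performs the same one-line conjugation computation. The ``bookkeeping'' step you flag is exactly the ingredient the paper also quotes from \cite{Crespo_Induced}, namely the formula $\lambda_{G}(xy)(x_{i}y_{j}) = x_{\lambda_{S}(xy)[i]}y_{\lambda_{T}(y)[j]}$ for the left regular representation in these coordinates.
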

\begin{proof}
Let  $ C = \{ x_{1}, \ldots ,x_{s} \} $ and $ G_{F} = \{ y_{1}, \ldots ,y_{t} \} $, so that $ G = \{ x_{i}y_{j} \mid 1 \leq i \leq s, \; 1 \leq j \leq t \} $. Following \cite{Crespo_Induced}, we identify $ \perm{C} $ and $ \perm{G_{F}} $ with the symmetric groups $ \mathrm{Symm}(s) $ and $ \mathrm{Symm}(t) $ respectively via their action on subscripts. There is an injective homomorphism
\[ \iota : \mathrm{Symm}(s) \times \mathrm{Symm}(t) \rightarrow \mathrm{Symm}(st) = \mathrm{Symm}( \{ 1, \ldots ,s\} \times \{ 1, \ldots ,t \}) \]
defined by
\[ \iota ( \sigma , \tau ) ( i,j ) = ( \sigma(i), \tau(j) ). \]
Next we describe the action of $ G $ on itself  via the left regular embedding $ \lambda_{G} : G \rightarrow \mathrm{Symm}(st) $. We have the left regular embedding $ \lambda_{T} : G_{F} \rightarrow \mathrm{Symm}(t) $. In \cite[Theorem 3]{Crespo_Induced} a homomorphism $ \lambda_{S} : G \rightarrow \mathrm{Symm}(s) $ is constructed, and it is shown that the action of $ G $ on itself by left translation is then given by
\[ \lambda_{G}(xy) (x_{i}y_{j}) = x_{\lambda_{S}(xy)[i]} y_{\lambda_{T}(y)[j]}. \]
Now consider the group $ T $ underlying the Hopf-Galois structure on $ L/F $. It is a regular subgroup of $ \perm{G_{F}} $, which we are identifying with $ \mathrm{Symm}(t) $ via its action on the subscripts of the elements $ y_{j} $. To show that $ C $ acts trivially on $ T $ we must show that $ \lambda_{G}(x)\tau \lambda_{G}(x)^{-1} = \tau $ for all $ x \in C $. For an arbitrary element $ x_{i}y_{j} \in G $, we have:
\begin{eqnarray*}
\lambda_{G}(x)\tau \lambda_{G}(x^{-1}) [x_{i}y_{j}] & = & \lambda_{G}(x)\tau x_{\lambda_{S}(x^{-1})[i]} y_{j} \\
& = & \lambda_{G}(x) x_{\lambda_{S}(x^{-1})[i]} y_{\tau(j)} \\
& = & x_{ \lambda_{S}(x)(\lambda_{S}(x^{-1})[i])} y_{\tau(j)} \\
& = & x_{i} y_{\tau(j)} \\
& = & \tau [x_{i}y_{j}].
\end{eqnarray*}
Thus the action of $ C $ on $ T $ is trivial.
\end{proof}

We note that this situation is not symmetric: the action of $ G_{F} $ on $ S $ is not trivial in general. 

\begin{example}
Let $ L/K $ be a Galois extension of fields with Galois group 
\[ G = \langle a,b \mid a^{3}=b^{2}=1, bab=a^{-1} \rangle \cong D_{3}, \]
and let $ F = L^{\langle b \rangle} $. Then $ G_{F} = \langle b \rangle $ has a normal complement $ C = \langle a \rangle $ in $ G $. Identifying $ \perm{C} $ with $ \perm{G/G_{F}} $, let $ \sigma \in \perm{C} $ and $ \tau \in \perm{G_{F}} $ be defined by $ \sigma(a^{i}) = a^{i+1} $ and $ \tau(b^{j})=b^{j+1} $, and let $ S = \langle \sigma \rangle $ and $ T = \langle \tau \rangle $. It is easy to verify that $ S,T $ correspond via the theorem of Greither and Pareigis to Hopf-Galois structures on $ F/K $, $ L/F $ respectively. Since $ G_{F} $ has a normal complement in $ G $, we may induce from these a Hopf-Galois structure on $ L/K $, whose underlying group $ S \times T $ acts on $ G $ as follows:
\[ (\sigma^{u}, \tau^{v}) [a^{i}b^{j}] = a^{i+u}b^{j+v}. \]
In this case $ G_{F} $ does not act trivially on $ S $. We have:
\begin{eqnarray*}
\lambda_{G}(b) \sigma \lambda_{G}(b^{-1}) [a^{i}b^{j}] & = & \lambda_{G}(b) \sigma [a^{-i}b^{j-1}] \\
& = & \lambda_{G}(b) [a^{1-i}b^{j-1}] \\
& = & a^{i-1}b^{j} \\
& = & \sigma^{-1} [a^{i} b^{j}]. 
\end{eqnarray*}
Thus $ \lambda_{G}(b) \sigma \lambda_{G}(b^{-1}) = \sigma^{-1} $, and so the action of $ G_{F} $ on $ S $ is not trivial. 
\end{example}

We now specialize to the situation considered in section \ref{section_action_of_G0}:  $ L/K $ is a tame Galois extension of $ p $-adic fields with group $ G $ and inertia subgroup $ G_{0} $, $ H = L[N]^{G} $ is a commutative Hopf algebra giving a Hopf-Galois structure on the extension, $ T $ is the $ p $-part of $ N $, and $ S $ is the prime-to-$p$ part of $ N $, so that $ N = S \times T $. We shall show that the Hopf-Galois structure given by $ H $ is induced from Hopf-Galois structures on $ L^{T}/K $ and $ L/L^{T} $. 

\begin{prop} \label{prop_tame_normal_complement}
Let $ F = L^{T} $ and $ G_{F} = \Gal{L/F} $. Then the subgroup $ G_{F}  $ has a normal complement in $ G $ containing $ G_{0} $. 
\end{prop}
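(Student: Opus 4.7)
The plan is to exploit the strong structural constraints that tame ramification places on $ G $. Since $ L/K $ is a tame Galois extension of $ p $-adic fields, the inertia subgroup $ G_{0} $ is normal in $ G $, has order coprime to $ p $, and the quotient $ \bar{G} = G/G_{0} $ is cyclic (being the Galois group of the corresponding extension of finite residue fields). Writing $ |G| = |G_{0}| \cdot |\bar{G}| $, the entire $ p $-part of $ |G| $ is therefore captured within the cyclic group $ \bar{G} $.

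Next I identify $ |G_{F}| $. Recalling from the opening of section \ref{section_induced} that $ [L:L^{T}] = |T| $, we get $ |G_{F}| = |T| $, which is a $ p $-power. Moreover, since $ T $ is by definition the $ p $-part of $ N $ and $ |N| = [L:K] = |G| $, the order $ |G_{F}| $ equals the full $ p $-part of $ |G| $, and hence the full $ p $-part of $ |\bar{G}| $. Because $ |G_{F}| $ is a $ p $-power and $ |G_{0}| $ is coprime to $ p $, we immediately obtain $ G_{F} \cap G_{0} = \{1\} $, and consequently the image $ \bar{G}_{F} := G_{F} G_{0} / G_{0} $ is the unique Sylow $ p $-subgroup of the cyclic group $ \bar{G} $.

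Finally, I construct the complement. The cyclic group $ \bar{G} $ has a unique subgroup $ \bar{C} $ of order equal to the prime-to-$ p $ part of $ |\bar{G}| $, and this $ \bar{C} $ is automatically the unique complement of the Sylow $ p $-subgroup $ \bar{G}_{F} $ in $ \bar{G} $. I take $ C $ to be the preimage of $ \bar{C} $ under the projection $ G \to \bar{G} $; then $ G_{0} \subseteq C $ by construction, and $ C $ is normal in $ G $ since $ \bar{G} $ is abelian. Order considerations give $ |C| = |G|/|G_{F}| $, and any element of $ C \cap G_{F} $ maps into $ \bar{C} \cap \bar{G}_{F} = \{1\} $ and so lies in $ G_{0} \cap G_{F} = \{1\} $. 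Thus $ C $ is the desired normal complement of $ G_{F} $ containing $ G_{0} $.

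I do not anticipate any serious obstacle: the real content of the argument is the observation that $ |T| $ is the \emph{full} $ p $-part of $ |G| $, not merely some $ p $-power subgroup of it, after which the decomposition drops out automatically from the cyclic structure of $ \bar{G} $ forced by tameness.
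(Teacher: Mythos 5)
Your proof is correct, and it constructs exactly the same normal complement as the paper: the preimage in $G$ of the unique subgroup of the cyclic quotient $G/G_{0}$ whose order is the prime-to-$p$ part $f_{0}$ of $f=|G/G_{0}|$ (the paper realizes this subgroup as the kernel of $g\mapsto g^{f_{0}}G_{0}$). The only divergence is in how complementarity is verified. The paper invokes the Schur--Zassenhaus theorem to produce some complement of $C$, notes that any complement must be a Sylow $p$-subgroup and that complements and Sylow $p$-subgroups each form a single conjugacy class, and concludes that $G_{F}$ (having order $p^{r}$) is therefore a complement. You instead check directly that $C\cap G_{F}=\{1\}$ --- via $G_{F}\cap G_{0}=\{1\}$ and $\bar{C}\cap\bar{G}_{F}=\{1\}$ in the cyclic quotient --- and that $|C|\,|G_{F}|=|G|$, which is more elementary and sidesteps Schur--Zassenhaus entirely. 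Both arguments rest on the same essential observation, which you correctly isolate: $|T|$ is the \emph{full} $p$-part of $|G|$ because $N$ is regular on $G$ (so $|N|=|G|$), and tameness forces that entire $p$-part into the cyclic quotient $G/G_{0}$.
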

\begin{proof}
Let $ e= |G_{0}| $ denote the ramification index, and $ f = |G|/|G_{0}| $ the residue field degree, of $ L/K $, so that $ |G| = ef $, and let $ p^{r} $ be the largest power of $ p $ that divides $ |G| $. Since $ L/K $ is tame, $ e $ is coprime to $ p $,  and we may write $ f = p^{r}f_{0} $ with $ f_{0} $ coprime to $ p $. 
Note that the quotient group $ G / G_{0} $ is cyclic of order $ f $, and let $ C \subset G $ be the kernel of the composition of homomorphisms
\[ \begin{array}{lccccc}
& G & \rightarrow & G/ G_{0} & \rightarrow & G/G_{0} \\
\mbox{defined by } &g & \mapsto & gG_{0} & \mapsto & g^{f_{0}}G_{0}.
\end{array} \]
Then $ C $ is a normal subgroup of $ G $ which contains $ G_{0} $ and has order $ ef_{0} $ and index $ p^{r} $. Since these are coprime, by the Schur-Zassenhaus Theorem \cite[Theorem 8.35]{MORT_I} $ C $ has a complement in $ G $, and all of the complements of $ C $ in $ G $ are conjugate. But any complement of $ C $ in $ G $ must be a Sylow $ p $-subgroup of $ G $, and these are all conjugate. Therefore the distinct complements of $ C $ in $ G $ are precisely the Sylow $ p $-subgroups of $ G $. But $ |G_{F}| = [L:L^{T}] = |T| = p^{r} $ since $ T $ is the unique Sylow $p$-subgroup of $ N $, and so $ G_{F} $ is a Sylow $p$-subgroup of $ G $. Therefore $ G_{F} $ is a complement of $ C $ in $ G $, and so $ C $ is a normal complement of $ G_{F} $ in $ G $ containing $ G_{0} $. 
\end{proof}

\begin{prop} \label{prop_tame_structures_induced}
The Hopf-Galois structure on $ L/K $ with underlying group $ N $ is induced from Hopf-Galois structures on $ L^{T}/K $ and $ L/L^{T} $ with underlying groups $ S $ and $ T $ respectively. 
\end{prop}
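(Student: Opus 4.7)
The plan is to reduce this statement to a direct citation of \cite[Theorem 9]{Crespo_Induced}, which is essentially the converse of the induction construction of Crespo, Rio and Vela. That theorem takes as input a commutative Hopf-Galois structure on $L/K$ whose underlying group admits a decomposition $N = S \times T$ into two $G$-stable subgroups, together with the hypothesis that $\Gal{L/L^{T}}$ has a normal complement in $G$, and produces precisely the conclusion we want: Hopf-Galois structures on $L^{T}/K$ and on $L/L^{T}$ with underlying groups $S$ and $T$ inducing the given one on $L/K$.

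To apply this theorem I would first record that $S$ and $T$ are $G$-stable. Since $N$ is abelian, $T$ is the unique Sylow $p$-subgroup of $N$ and $S$ is the unique complementary Hall subgroup, so both are characteristic in $N$. The action of $G$ on $N$ is by conjugation via $\lambda$ inside $\perm{G}$, which is an automorphism of $N$, and therefore stabilises both $S$ and $T$. This observation was already made at the start of the proof of Proposition \ref{prop_LambdaG0_group_ring}, so there is nothing new to verify here.

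Second, the normal-complement hypothesis for $G_{F} = \Gal{L/L^{T}}$ is exactly what Proposition \ref{prop_tame_normal_complement} has just provided (indeed, that proposition gives the stronger fact that the complement contains $G_{0}$). With both hypotheses of \cite[Theorem 9]{Crespo_Induced} in place, the desired conclusion is immediate.

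I do not foresee any real obstacle: all of the substantive content has already been distilled into Proposition \ref{prop_tame_normal_complement} (where tameness enters through the Schur-Zassenhaus theorem to split off the Sylow $p$-subgroup) and into the characteristic nature of $S$ and $T$ within the abelian group $N$. The proof itself should therefore amount to little more than a citation.
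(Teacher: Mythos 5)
Your proposal is correct and matches the paper's proof exactly: the paper likewise cites Proposition \ref{prop_LambdaG0_group_ring} for the $G$-stability of $S$ and $T$ and Proposition \ref{prop_tame_normal_complement} for the normal complement, then applies \cite[Theorem 9]{Crespo_Induced}.
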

\begin{proof}
By Proposition \ref{prop_LambdaG0_group_ring}, $ S $ and $ T $ are $ G $-invariant subgroups of $ N $ such that $ N = S \times T $, and by Proposition \ref{prop_tame_normal_complement} $ \Gal{L/L^{T}} $ has a normal complement in $ G $. Therefore we may apply \cite[Theorem 9]{Crespo_Induced} and conclude that there is a Hopf-Galois structure on $ L^{T}/K $ with underlying group $ S $ a Hopf-Galois structure on $ L/L^{T} $ with underlying group $ T $, and that the Hopf-Galois structure on $ L/K $ with underlying group $ N $ is induced from these. 
\end{proof}

\begin{cor} \label{cor_G0_T_trivial}
The action of $ G_{0} $ on $ T $ is trivial. 
\end{cor}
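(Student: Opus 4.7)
The corollary is a direct consequence of stringing together the three preceding results of this section. The plan is as follows.

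First I would invoke Proposition \ref{prop_tame_structures_induced}, which tells us that the Hopf-Galois structure on $L/K$ with underlying group $N = S \times T$ is induced from Hopf-Galois structures on $L^T/K$ and $L/L^T$ with underlying groups $S$ and $T$ respectively. This places us exactly in the setting of Proposition \ref{prop_induced_complement_trivial_action}, with the role of $F$ played by $L^T$.

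Next I would apply Proposition \ref{prop_tame_normal_complement}, which furnishes a normal complement $C$ of $G_F = \Gal{L/L^T}$ in $G$ and, crucially, guarantees that $G_0 \subseteq C$. Feeding this $C$ into Proposition \ref{prop_induced_complement_trivial_action}, we conclude that the conjugation action of $C$ on $T$ (inside $\perm{G}$, via the left regular embedding $\lambda$) is trivial.

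Since $G_0$ is contained in $C$, the action of $G_0$ on $T$ is a restriction of the trivial action of $C$ on $T$, and is therefore trivial itself. There is really no obstacle here: the content of the corollary has been absorbed into Propositions \ref{prop_induced_complement_trivial_action}, \ref{prop_tame_normal_complement}, and \ref{prop_tame_structures_induced}, and the corollary merely records their combination. The proof is essentially a one-line citation of these three results.
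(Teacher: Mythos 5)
Your proposal is correct and is essentially identical to the paper's own proof: the paper likewise combines Proposition \ref{prop_tame_structures_induced} (so that Proposition \ref{prop_induced_complement_trivial_action} applies and the normal complement $C$ acts trivially on $T$) with Proposition \ref{prop_tame_normal_complement} (which gives $G_{0} \subseteq C$). No issues.
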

\begin{proof}
By Proposition \ref{prop_induced_complement_trivial_action}, the normal complement $ C $ of $ \Gal{L/L^{T}} $ in $ G $ acts trivially on $ T $, and by Proposition \ref{prop_tame_normal_complement} $ G_{0} \subseteq C $. 
\end{proof}

We can now state and prove our main theorem:

\begin{thm} \label{thm_main}
Let $ L/K $ be a tame Galois extension of $ p $-adic fields, $ H = L[N]^{G} $ a commutative Hopf algebra giving a Hopf-Galois structure on $ L/K $, and $ \B $ a fractional ideal of $ L $. Then $ \B $ is a free $ \OL[N]^{G} $-module.
\end{thm}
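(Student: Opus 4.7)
The plan is to combine the two main strands of the paper: Theorem \ref{thm_G0_trivial_p_part} from Section \ref{section_action_of_G0}, which reduces freeness of $\B$ over $\Lambda^G$ to the condition that $G_0$ acts trivially on the $p$-part $T$ of $N$, and Corollary \ref{cor_G0_T_trivial} from Section \ref{section_induced}, which verifies precisely that condition for tame Galois extensions of $p$-adic fields equipped with a commutative Hopf-Galois structure.

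First I would note that because $H = L[N]^G$ is commutative, the underlying group $N$ is abelian, so the decomposition $N = S \times T$ into its prime-to-$p$ and $p$-parts makes sense and both factors are $G$-stable. This puts us inside the hypotheses of Section \ref{section_action_of_G0}. Next I would invoke Corollary \ref{cor_G0_T_trivial}, which asserts that the inertia subgroup $G_0$ of $G$ acts trivially on $T$; this corollary is itself a consequence of Proposition \ref{prop_tame_structures_induced} (every commutative Hopf-Galois structure on a tame Galois extension of $p$-adic fields is induced from structures on $L^T/K$ and $L/L^T$) together with Proposition \ref{prop_induced_complement_trivial_action} (induction from a normal complement forces the complement to act trivially on the $p$-part).

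With those two facts in hand, the hypotheses of Theorem \ref{thm_G0_trivial_p_part} are met, and I can apply it directly to conclude that $\B$ is free over $\Lambda^G = \OL[N]^G$. There are no further calculations to perform at this stage; the work is entirely concentrated in the two preceding sections.

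The main obstacle in this overall program was not the final assembly but rather Corollary \ref{cor_G0_T_trivial}: in the earlier paper \cite{PJT_Integral} one could only treat the case where $G_0$ acts trivially on the whole of $N$, and extending to the $p$-part required the detour through Crespo, Rio and Vela's theory of induced Hopf-Galois structures and a Schur--Zassenhaus argument (Proposition \ref{prop_tame_normal_complement}) to produce a normal complement of the relevant Sylow $p$-subgroup. Once that structural fact about tameness is in place, the proof of Theorem \ref{thm_main} itself is a one-line deduction.
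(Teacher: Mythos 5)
Your proposal is correct and matches the paper's own proof exactly: the paper deduces Theorem \ref{thm_main} in one line from Corollary \ref{cor_G0_T_trivial} (triviality of the $G_0$-action on the $p$-part of $N$) followed by Theorem \ref{thm_G0_trivial_p_part}. Your account of the supporting machinery (induced structures via Crespo--Rio--Vela and the Schur--Zassenhaus argument of Proposition \ref{prop_tame_normal_complement}) is also an accurate description of how the paper arrives at that corollary.
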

\begin{proof}
By corollary \ref{cor_G0_T_trivial} the action of the inertia group $ G_{0} $ on the $ p $-part of $ N $ is trivial, and by Theorem \ref{thm_G0_trivial_p_part} this implies that $ \B $ is a free $ \OL[N]^{G} $-module. 
\end{proof}

The assumption that the group $ N $ underlying $ H $ is abelian has been crucial to many of our results, in particular Propositions \ref{prop_LambdaG0_group_ring} and \ref{prop_OLSG0_maximal}. One would not, therefore, expect the arguments presented thus far to generalize to the noncommutative case. In fact, we can give an example to show that the direct generalization of Theorem \ref{thm_main} to noncommutative Hopf-Galois structures does not hold:

\begin{example}
Let $ p $ be a prime number that is congruent to $ 2 $ modulo $ 3 $, so that $ K = \Q_{p} $ does not contain a primitive cube root of unity, and let $ L $ be the splitting field of $ x^{3}-p $ over $ K $.  Then $ L = K(a, \zeta) $, where $ a^{3}=p $ and $ \zeta $ is a primitive cube root of unity, and $ L/K $ is Galois with group $ G \cong D_{3} $. Since $ G $ is nonabelian, $ L/K $ admits a distinguished nonclassical Hopf-Galois structure, with underlying group $ \lambda(G) $. The corresponding Hopf algebra is $ H_{\lambda} = L[\lambda(G)]^{G} $, and we shall write $ \A_{\lambda} $ for the associated order of $ \OL $ in $ H_{\lambda} $. We shall show that $ \OL[\lambda(G)]^{G} \subsetneq \A_{\lambda} $, which implies that $ \OL $ is not a free $ \OL[\lambda(G)]^{G} $-module. 
\\ \\
We present $ G $ as 
\[ G = \langle \sigma, \tau \mid \sigma^{3} = \tau^{2} = 1, \tau \sigma \tau = \sigma^{-1}  \rangle, \]
where 
\[ \begin{array}{ll}
\sigma(a) = \zeta a, & \sigma(\zeta) = \zeta \\
\tau(a)=a, & \tau(\zeta)=\zeta^{-1}. \end{array} \]
The extension $ L/K $ cannot be unramified, since it has nonabelian Galois group, and it cannot be totally ramified, since the subextension $ K(\zeta)/K $ is nontrivial and unramified. Therefore the inertia subgroup must be the unique nontrivial proper normal subgroup of $ G $, which is $ \langle \sigma \rangle $, and so $ L/K $ is tamely ramified. Recall from the theorem of Greither and Pareigis that $ G $ acts on $ L[\lambda(G)] $ by acting on $ L $ as Galois automorphisms and acting on $ \lambda(G) $ by conjugation via the left regular embedding $ \lambda : G \rightarrow \perm{G} $. We may verify that the element 
\[ z = a^{2} \lambda(\tau) + \zeta^{2} a^{2} \lambda(\sigma^{-1} \tau) + \zeta a^{2} \lambda(\sigma \tau) \in \OL[\lambda(G)] \]
is fixed by every element of $ G $, and so actually lies in $ \OL[\lambda(G)]^{G} $. Therefore it acts on elements of $ L $ via equation \eqref{eqn_GP_action}. It is sufficient to consider its action on elements of the form $ \zeta^{i}a^{j} $ for $ i=1,2 $ and $ j=0,1,2 $, since these form an $ \OK $-basis of $ \OL $. For such an element, we have:
\begin{eqnarray*}
z \cdot  (\zeta^{i}a^{j}) & = & a^{2}(\lambda(\tau) + \zeta^{2} \lambda(\sigma^{-1} \tau) + \zeta \lambda(\sigma \tau) ) \cdot (\zeta^{i} a^{j}) \\
& = & a^{2}(\lambda(\tau)^{-1}[1_{G}] + \zeta^{2} \lambda(\sigma^{-1} \tau)^{-1}[1_{G}] + \zeta \lambda(\sigma \tau)^{-1}[1_{G}] ) (\zeta^{i} a^{j}) \\
& = & a^{2} ( \tau + \zeta^{2} \tau \sigma + \zeta \tau \sigma^{2})(\zeta^{i} a^{j}) \\
& = & a^{2+j} \zeta^{-i} (1 + \zeta^{2(1+j)} + \zeta^{(1+j)} ) \\
& = & \left\{ \begin{array}{cl} 3p\zeta^{-i}a & \mbox{if } j=2 \\ 0 & \mbox{otherwise} \end{array} \right. \\
& \in & p \OL 
\end{eqnarray*}
Hence $ z \cdot  x \in p \OL $ for all $ x \in \OL $, and so $ z / p  $ lies in $ \A_{\lambda} $ but not in $ \OL[\lambda(G)]^{G} $. In fact, it can be shown that $ {\displaystyle \A_{\lambda} =  \OL[\lambda(G)]^{G}\left[ z/p \right] } $, and that $ \OL $ is a free $ \A_{\lambda} $-module. (One could also deduce freeness of $ \OL $ over $ \A_{\lambda} $ from \cite[Theorem 1.1]{PJT_Canonical}, without constructing $ \A_{\lambda} $ explicitly.)
\end{example}

Thus, if $ L/K $ is a tame Galois extension of $ p $-adic fields with group $ G $ and $ H = L[N]^{G} $ is a noncommutative Hopf algebra giving a Hopf-Galois structure on the extension, then in general  $ \AH(\OL) $ may strictly contain $ \OL[N]^{G} $, and when this occurs $ \OL $ cannot be a free $ \OL[N]^{G} $-module. However, we are not aware of an example of a tame Galois (nor, indeed, of a tame, separable but non-normal) extension of local fields $ L/K $ admitting a Hopf-Galois structure $ H $ for which $ \OL $ is not free over $ \AH(\OL) $. 

\section{Non-normal extensions} \label{section_non_normal}

As mentioned in section \ref{section_introduction}, one of the ways in which Hopf-Galois module theory extends classical Galois module theory is that a separable, but non-normal, extension of local or global fields may admit Hopf-Galois structures, which can then be used to study the fractional ideals in these extensions. In this section we prove  an analogue of Theorem \ref{thm_main} for almost classically Galois extensions (see section \ref{section_introduction} for the definition). Once again our strategy employs induced Hopf-Galois structures: we will show that, given a tame almost classically Galois extension of $ p $-adic fields $ L/K $ which is Hopf-Galois for a commutative Hopf algebra, it is possible to induce a Hopf-Galois structure on the Galois closure $ E/K $ and  deduce information about the structure of fractional ideals of $ L $ from the structure of those of $ E $. Our first result formulates this deduction precisely; it is an analogue of \cite[Lemma 6]{Byott_Lettl} for induced Hopf-Galois structures, and applies more generally than our current situation. 

\begin{prop} \label{prop_non_normal_descent}
Let $ E/K $ be a Galois extension of local or global fields with group $ G $, and let $ L/K $ be a subextension such that $ E/L $ is at most tamely ramified. Suppose that $ G_{L} = \Gal{E/L} $ has a normal complement in $ G $, that there are Hopf-Galois structures on $ L/K $ and $ E/L $ given by Hopf algebras $ H_{S}, H_{T} $ with underlying groups $ S,T $ respectively, and that $ H $, with underlying group $ N = S \times T $, gives the Hopf-Galois structure on $ E/K $ induced by these. Let $ \B^{\prime} $ be an ambiguous fractional ideal of $ E $, and suppose that $ \B^{\prime} $ is free over its associated order in $ H $. Then $ \B = \B^{\prime} \cap L $ is free over its associated order in $ H_{S} $. 
\end{prop}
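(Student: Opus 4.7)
The strategy adapts the classical Byott--Lettl descent argument \cite[Lemma 6]{Byott_Lettl} to the induced Hopf-Galois setting. Because the Hopf-Galois structure on $E/K$ with underlying group $N = S \times T$ is induced from those on $L/K$ and $E/L$, the group-ring element $\theta_{T} = \sum_{\tau \in T} \tau$ is $G$-invariant, hence lies in $\OE[N]^{G}$, and its action on $E$ via the Greither-Pareigis formula \eqref{eqn_GP_action} takes $E$ into $L$; indeed, in the Crespo-Rio-Vela construction $T$ is identified with a system of coset representatives for $G/G_{L}$, so $\theta_T$ coincides, up to normalization, with the trace $\mathrm{Tr}_{E/L}$. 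Since $\B'$ is ambiguous, $\theta_T$ moreover carries $\B'$ into $\B' \cap L = \B$.

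Tameness of $E/L$ then supplies $\xi \in \OE$ with $\mathrm{Tr}_{E/L}(\xi) = 1$, equivalently $\theta_{T} \cdot \xi = 1$. Given a free generator $b$ of $\B'$ over $\A_{H}(\B')$, I would set
\[ \beta = \theta_{T} \cdot (\xi b) \in \B \]
and argue that $\beta$ is a free generator of $\B$ over $\A_{H_S}(\B)$. For the generation step, given $y \in \B$ rewrite $y = \theta_{T} \cdot (\xi y)$ using $\theta_T \cdot \xi = 1$, and invoke freeness of $\B'$ to express $y = z \cdot b$ with $z \in \A_{H}(\B')$; then ``slide'' $\xi$ past $z$, exploiting the $L[T]^{G}$-module structure on $E$ and the integral property of $\theta_T$, to obtain $y = z_S \cdot \beta$, where $z_S \in H_S$ arises from $z$ via a natural ``partial integration'' map $\pi : H \to H_S$ built from $\theta_T$. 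A Sweedler-notation computation, combined with the ambiguity of $\B'$, should show that $\pi$ carries $\A_{H}(\B')$ into $\A_{H_S}(\B)$, so $z_S \in \A_{H_S}(\B)$ as required.

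The main obstacle is precisely this last construction. Because the induced Hopf algebra $H$ does not literally factor as a tensor product of $H_S$ with a descent of $H_T$ over $K$, extracting the ``$S$-part'' of an element $z \in H$ requires a careful unpacking of the induction construction in \cite{Crespo_Induced} and of the explicit action of $S \times T$ via \eqref{eqn_GP_action}; verifying that the resulting projection intertwines the associated-order conditions on $\B'$ and $\B$ is the technical heart of the proof. Once this is in place, the reverse inclusion $\A_{H_S}(\B) \cdot \beta \subseteq \B$ is immediate from the definition of $\A_{H_S}(\B)$, yielding $\B = \A_{H_S}(\B) \cdot \beta$ and hence freeness of $\B$ over its associated order in $H_S$.
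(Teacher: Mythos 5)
You have the right skeleton --- $\theta_{T}$ acts as $\mathrm{Tr}_{E/L}$, tameness of $E/L$ lets you pass from $\B^{\prime}$ to $\B$, and a projection $H \to H_{S}$ transports the module structure --- but you explicitly defer the construction of that projection and the verification that it respects associated orders, and that deferred step is precisely the content of the proof; as written the proposal has a genuine gap. The gap closes much more simply than you anticipate. First, one identifies $H_{S}$ with $E[S]^{G}$ (a priori $H_{S} = \tilde{L}[S]^{\tilde{G}}$ over the Galois closure $\tilde{L}$ of $L/K$, but letting $G$ act on $E[S]$ through $\tilde{G} = G/\Gal{E/\tilde{L}}$ gives $E[S]^{G} = \tilde{L}[S]^{\tilde{G}}$). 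Then $\pi$ is just the coefficientwise map $\sum c_{\sigma,\tau}\,\sigma\tau \mapsto \sum c_{\sigma,\tau}\,\sigma$ on $E[N]$ induced by the group projection $N = S \times T \to S$; it is $G$-equivariant because $S$ and $T$ are $G$-stable, so it restricts to a $K$-algebra surjection $H \to H_{S}$ --- no Sweedler computation and no unpacking of the induction construction is required. The only identity needed is $(z\theta_{T})\cdot x = \pi(z)\cdot\mathrm{Tr}_{E/L}(x)$, which follows from $\tau\theta_{T} = \theta_{T}$ and $\sigma\theta_{T} = \theta_{T}\sigma$.

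Second, you do not need to prove in advance that $\pi$ carries $\A_{H}(\B^{\prime})$ into $\A_{H_{S}}(\B)$. Writing $\A = \A_{H}(\B^{\prime})$ and $\B^{\prime} = \A\cdot x$, and using $\B = \B^{\prime}\cap L = \mathrm{Tr}_{E/L}(\B^{\prime})$ (Ullom's lemma for tame $E/L$ and ambiguous $\B^{\prime}$; your trace-one element $\xi$ essentially re-derives this, so that part of your argument is sound), one gets
\[ \B = \theta_{T}\cdot(\A\cdot x) = (\A\theta_{T})\cdot x = \pi(\A)\cdot\mathrm{Tr}_{E/L}(x), \]
so $\B$ is free of rank one over the order $\pi(\A)$, and $\pi(\A)$ is then forced to equal $\A_{H_{S}}(\B)$ because the associated order is the only order over which $\B$ can possibly be free. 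Two smaller defects: your proposed generator $\beta = \theta_{T}\cdot(\xi b)$ does not match your own generation step, which naturally produces $\theta_{T}\cdot b = \mathrm{Tr}_{E/L}(b)$ as the generator (use that instead, and drop the ``sliding $\xi$ past $z$'' manoeuvre, which is both vague and unnecessary); and the claim that $\theta_{T}\cdot x$ equals $\mathrm{Tr}_{E/L}(x)$ exactly, with no normalization, does require a short check against the explicit regular action of $T$ in the induced structure, which you assert but do not supply.
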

\begin{proof}
By the theorem of Greither and Pareigis we have $ H_{S} = \tilde{L}[S]^{\tilde{G}} $, where $ \tilde{L}/K $ is the Galois closure of $ L/K $ and $ \tilde{G} = \Gal{\tilde{L}/K} $. We may express this as the fixed ring of the group algebra $ E[S] $ under a certain action of $ G $. 
Since $ \tilde{L} / K $ is Galois, $ G^{\prime} = \Gal{E/\tilde{L}} $ is a normal subgroup of $ G $, and $ G / G^{\prime} \cong \tilde{G} $. Let $ G $ act on $ E[S] $ by acting on $ E $ as Galois automorphisms and on $ S $ by factoring through $ \tilde{G} $. Then we have
\[ E[S]^{G} = \left( E[S]^{G^{\prime}} \right)^{\tilde{G}} = \left( E^{G^{\prime}}[S] \right)^{\tilde{G}} = \left( \tilde{L}[S] \right)^{\tilde{G}} = H_{S}. \]
Now let $ \pi : E[N] \rightarrow E[S] $ be the $ E $-algebra homomorphism induced by the projection $ N \rightarrow S $, so that
\[ \pi \left( \sum_{\sigma \in S, \tau \in T} c_{\sigma,\tau} \sigma\tau \right) =  \sum_{\sigma \in S, \tau \in T} c_{\sigma,\tau} \sigma \mbox{ for all } c_{\sigma,\tau} \in E. \]
Since $ S,T $ are $ G $-stable subgroups of $ N $, $ \pi $ is $ G $-equivariant, where the $ G $ acts on $ E[N] $ as stated in the theorem of Greither and Pareigis, and on $ E[S] $ as defined above. It therefore restricts to a $ K $-algebra homomorphism $ \pi : H \rightarrow H_{S} $. Now let 
\[ \theta_{T} = \sum_{\tau \in T} \tau \in H_{T}. \]
For all $ x \in E $, we have
\[ \theta_{T} \cdot  x = \left(\sum_{\tau \in T} \tau \right) \cdot x = \sum_{\tau \in T} \tau^{-1}(1_{G})[x] = \sum_{g \in {\scriptstyle \mathrm{Gal}(E/L)}} g(x) = \mathrm{Tr}_{E/L}(x). \]
In addition, note that $ \theta_{T} \tau = \theta_{T} $ for all $ \tau \in T $, and that $ \sigma \theta_{T} = \theta_{T} \sigma $ for all $ \sigma \in S $. Therefore, if
\[ z = \sum_{\sigma \in S, \tau \in T} c_{\sigma,\tau} \sigma\tau \in H, \] 
then for all $ x \in E $ we have
\begin{eqnarray*}
( z \theta_{T} ) \cdot  x & = & \left( \sum_{\sigma \in S, \tau \in T} c_{\sigma,\tau} \sigma \tau \theta_{T} \right) \cdot  x \\
& = & \left( \sum_{\sigma \in S, \tau \in T} c_{\sigma,\tau} \sigma \theta_{T} \right)  \cdot x \\
& = & \left( \sum_{\sigma \in S, \tau \in T} c_{\sigma,\tau} \sigma  \right) \cdot \left(\theta_{T} \cdot x \right)\\
& = & \pi(z) \cdot \mathrm{Tr}_{E/L}(x).
\end{eqnarray*}
Now let $ \A = \AH(\B^{\prime}) $, and suppose that $ \B^{\prime} $ is a free $ \A $-module, say $ \B^{\prime} =\A \cdot  x $. Since $ E/L $ is at most tamely ramified and $ \B^{\prime} $ is an ambiguous fractional ideal of $ E $, we have $\B = \B^{\prime} \cap L = \mathrm{Tr}_{E/L}(\B^{\prime}) $ \cite[Corollary 1.2]{Ullom69}, and so:
\[ \B = \mathrm{Tr}_{E/L}(\B^{\prime}) = \theta_{T} \cdot  \B^{\prime} = \theta_{T}  ( \A \cdot x ) = (\A  \theta_{T}) \cdot  x = \pi(\A) \cdot \mathrm{Tr}_{E/L}(x). \]
Thus $ \B $ is a free $ \pi(\A) $-module of rank $ 1 $, and so $ \pi(\A) $ is the associated order of $ \B $ in $ H_{S} $. 
\end{proof}

This proposition facilitates an approach to studying the Hopf-Galois module structure of fractional ideals in an almost classically Galois extension $ L/K $ of local or global fields $ L/K $ whose Galois closure $ E/K $ has the property that $ E/L $ is at most tamely ramified. Given a Hopf-Galois structure on $ L/K $, we choose a Hopf-Galois structure admitted by $ E/L $ (as a Galois extension, it will always admit at least the classical Hopf-Galois structure), and use the fact that $ \Gal{E/L} $ has a normal complement in $ \Gal{E/K} $ to induce from these a Hopf-Galois structure on the Galois extension $ E/K $ (see the discussion of induced Hopf-Galois structures at the start of section \ref{section_induced}). Now applying Proposition \ref{prop_non_normal_descent} allows us to deduce information about the structure of fractional ideals of $ L $ with respect to the given Hopf-Galois structure from information about the structure of fractional ideals of $ E $ with respect to the induced Hopf-Galois structure. 
\\ \\
In order to apply this approach to a tame almost classically Galois extension of $ p $-adic fields, we verify that the Galois closure of a tame extension of $ p $-adic fields is again tame:

\begin{prop} \label{prop_galois_closure_tame_extension}
Let $ L/K $ be a tame non-normal extension of $ p $-adic fields with Galois closure $ E/K $. Then $ E/L $ is unramified and $ E/K $ is tame. 
\end{prop}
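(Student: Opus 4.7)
The plan is to first establish that $E/K$ is tame, then use this together with the Galois closure property to force $E/L$ to be unramified.

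For tameness of $E/K$, I would invoke the maximal tamely ramified extension $K^{\mathrm{tame}}$ of $K$ inside a fixed algebraic closure: this is a Galois extension of $K$, and since $L/K$ is tame we have $L \subseteq K^{\mathrm{tame}}$. Because $K^{\mathrm{tame}}/K$ is Galois, it contains every $K$-conjugate of $L$, and hence contains the Galois closure $E$. Therefore $E/K$ is tame. (Alternatively, one can argue directly that the compositum of the conjugates of $L$ is tame, since each conjugate has the same ramification index as $L$ and compositums of tame extensions of local fields are tame.)

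For the unramified claim, let $G = \Gal{E/K}$, let $G_0$ be the inertia subgroup of $G$, and let $H = \Gal{E/L}$. The inertia subgroup of $H$ (for the extension $E/L$) is $H \cap G_0$, so $E/L$ is unramified precisely when $H \cap G_0 = \{1\}$. Since $E/K$ is tame, $G_0$ is cyclic, and consequently $G_0$ has a unique subgroup of each order dividing $|G_0|$. In particular $H \cap G_0$ is the unique subgroup of $G_0$ of its order, and is therefore stable under every automorphism of $G_0$. Because $G_0$ is normal in $G$, conjugation by any $g \in G$ restricts to an automorphism of $G_0$, so $g(H \cap G_0)g^{-1} = H \cap G_0$. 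Thus $H \cap G_0$ is a normal subgroup of $G$ that is contained in $H$.

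Finally, since $E$ is the Galois closure of $L/K$, the normal core $\bigcap_{g \in G} gHg^{-1}$ of $H$ in $G$ is trivial. The normal subgroup $H \cap G_0$ of $G$ is contained in $H$, hence in this core, and so $H \cap G_0 = \{1\}$. This shows that $E/L$ is unramified, completing the proof. The only step that requires any real thought is the observation that $H \cap G_0$ is automatically normal in $G$; once this is recognized, the Galois closure hypothesis immediately forces it to be trivial.
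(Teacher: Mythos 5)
Your proof is correct, but it takes a genuinely different route from the paper's. The paper argues constructively: writing $e$ for the ramification index of $L/K$ and choosing a prime element $\pi_{L}$ with $\pi_{L}^{e} = v\pi_{K}$ for a unit $v$, it adjoins a primitive $e$-th root of unity $\zeta$ and an $e$-th root $u$ of $v$ to produce an explicit extension $L' = L(\zeta,u)$ that is unramified over $L$ (because $p \nmid e$) and over which $x^{e}-\pi_{K}$ splits; the Galois closure $E$ is then trapped inside $L'$, which yields both conclusions simultaneously. You instead establish tameness of $E/K$ first (via the maximal tame extension $K^{\mathrm{tame}}$, or equivalently the fact that conjugates and compositums of tame extensions of local fields are tame), and then exploit structure theory: the inertia group $G_{0}$ of the tame extension $E/K$ is cyclic, so the inertia group $H \cap G_{0}$ of $E/L$ is characteristic in $G_{0}$ and hence normal in $G$; being a normal subgroup of $G$ contained in $H = \Gal{E/L}$, it lies in the normal core of $H$, which is trivial precisely because $E$ is the Galois closure of $L/K$. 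All the ingredients you invoke (the inertia group of $E/L$ is $H \cap G_{0}$, tame inertia is cyclic, subgroups of cyclic groups are characteristic) are standard, so the argument is sound. The paper's version is self-contained at the level of uniformizers and gives the explicit containment $E \subseteq L(\zeta,u)$; yours is shorter and more conceptual, isolating exactly the two facts that make the statement true --- cyclicity of tame inertia and triviality of the normal core --- and would adapt immediately to any setting where those two facts hold.
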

\begin{proof}
Let $ e $ denote the ramification index, and $ L_{0}/K $ the maximal unramified subextension, of $ L/K $, so that $ L/L_{0} $ is totally ramified of degree $ e $. There exists a prime element $ \pi_{L} $ of $ L $ such that $ \pi_{L}^{e} = v\pi_{K} $ with $ v $  a unit of $ \OLO $. Let $ \zeta $ be a primitive $ e^{th} $ root of unity, $ u $ an $ e^{th} $ root of $ v $, and $ L^{\prime} = L(\zeta,u) $. Then $ L^{\prime}/L $ is unramified, since $ e $ is prime to $ p $. Let $ \pi_{L}^{\prime} = u^{-1}\pi_{L} \in L^{\prime} $. Then $ (\pi_{L}^{\prime})^{e} = \pi_{K} $, so $ \pi_{L}^{\prime} $ is a root of the polynomial $ x^{e}-\pi_{K} $, which is defined over $ K $ and splits over $ L^{\prime} $. Now let $ L_{0}^{\prime}/K $ denote the maximal unramified subextension of $ L^{\prime}/K $, and let $ f(x) \in K[x] $ be a polynomial such that $ L_{0}^{\prime} $ is the splitting field of $ f(x) $ over $ K $. Then the smallest extension of $ K $ over which both $ x^{e}-\pi_{K} $ and $ f(x) $ split is a Galois extension of $ K $ containing $ L $ and contained in $ L^{\prime} $, and so the Galois closure $ E/K $ of $ L/K $ is contained in $ L^{\prime} $, which is an unramified extension of $ L $. Therefore $ E/L $ is unramified and the ramification index of $ E/K $ is $ e $, so $ E/K $ is tame. 
\end{proof}

Now we have:

\begin{thm} \label{thm_non_normal}
Let $ L/K $ be a tame almost classically Galois extension of $ p $-adic fields whose Galois closure $ E/K $ has group $ G $. Let $ H = L[N]^{G} $ be a commutative Hopf algebra giving a Hopf-Galois structure on $ L/K $, and let $ \B $ be a fractional ideal of $ L $. Then $ \B $ is a free $ \OL[N]^{G} $-module.
\end{thm}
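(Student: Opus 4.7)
The plan is to leverage Theorem \ref{thm_main} on the Galois closure $E/K$ by inducing a suitable Hopf-Galois structure there, then descend to $L/K$ via Proposition \ref{prop_non_normal_descent}. First, Proposition \ref{prop_galois_closure_tame_extension} gives that $E/K$ is tame and $E/L$ is unramified, so $T := \Gal{E/L}$ is cyclic. Since $L/K$ is almost classically Galois, $T$ has a normal complement $C$ in $G$.

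Next, equip $E/L$ with its classical Hopf-Galois structure, so that $T$ (realized as $\rho(T) \subset \perm{T}$) is the underlying group. Because $T$ has a normal complement in $G$, the construction of \cite[Theorem 3]{Crespo_Induced} produces an induced Hopf-Galois structure $H^{\prime}$ on $E/K$ with underlying group $N \times T$. This group is abelian (since $N$ is abelian and $T$ is cyclic), so $H^{\prime}$ is commutative. Set $\B^{\prime} = \B \OE$: as $E/L$ is unramified one has $\B^{\prime} \cap L = \B$, and $\B^{\prime}$ is automatically ambiguous because $E$ is local. Theorem \ref{thm_main} applied to $E/K$ with $H^{\prime}$ then gives that $\B^{\prime}$ is free over $\OE[N \times T]^{G}$.

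Now invoke Proposition \ref{prop_non_normal_descent}: $\B = \B^{\prime} \cap L$ is free over its associated order in $H$, which is identified as $\pi(\OE[N \times T]^{G})$, where $\pi : E[N \times T] \to E[N]$ is induced by projection to the first factor. To complete the proof, I would verify that $\pi(\OE[N \times T]^{G}) = \OE[N]^{G}$. The crucial observation is that $G$ acts trivially on $T$ in the induced structure: Proposition \ref{prop_induced_complement_trivial_action} handles the normal complement $C$, while for $T = G_L$ itself the classical choice makes the conjugation action on $\rho(T)$ by $\lambda(T)$ trivial, since the left and right regular representations of $T$ commute. Consequently $\OE[N \times T]^{G} = (\OE[N]^{G})[T]$, and $\pi$ collapses the $T$-variable via augmentation, surjecting onto $\OE[N]^{G}$.

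The main obstacle is the last step, matching $\pi(\OE[N \times T]^{G})$ to the expected order: this is precisely why the classical structure on $E/L$ (as opposed to some other Hopf-Galois structure admitted by $E/L$) is the natural choice, since it makes $G$ act trivially on $T$, without which the projection $\pi$ need not surject onto $\OE[N]^{G}$ and the associated order of $\B$ in $H$ could properly contain $\OE[N]^{G}$.
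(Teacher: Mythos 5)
Your proposal is correct and follows essentially the same route as the paper: use Proposition \ref{prop_galois_closure_tame_extension} to see $E/L$ is unramified, induce from $H$ and the classical structure on $E/L$ a commutative Hopf--Galois structure on $E/K$ with underlying group $N\times\rho(G_L)$, apply Theorem \ref{thm_main} to $\B\OE$, and descend via Proposition \ref{prop_non_normal_descent}. Your closing discussion of why $\pi$ carries the induced integral order onto $\OE[N]^{G}$ is a reasonable supplement to a step the paper asserts without comment (though surjectivity already follows from $G$-equivariance of $\pi$ and lifting $z\in\OE[N]^{G}$ to $z\cdot 1_{T}$, without needing the triviality of the $G$-action on $T$).
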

\begin{proof}
By Proposition \ref{prop_galois_closure_tame_extension}, the extension $ E/L $ is unramified, hence cyclic; let $ G_{L} $ denote its Galois group. Under the correspondence established by the theorem of Greither and Pareigis, the classical Hopf-Galois structure on $ E/L $ corresponds to the regular subgroup $ \rho(G_{L}) $ of $ \perm{G_{L}} $. Since $ L/K $ is an almost classically Galois extension, $ G_{L} $ has a normal complement in $ G $, and so by \cite[Theorem 3]{Crespo_Induced} we may induce a Hopf-Galois structure on $ E/K $ whose underlying group $ N \times \rho(G_{L}) $ is abelian. By Theorem \ref{thm_main},  $ \B\OE $ is a free $ \OE[N \times \rho(G_{L})]^{G} $-module. Since $ E/L $ is unramified, we may apply Proposition \ref{prop_non_normal_descent} to conclude that $ \B $ is a free module over $ \pi( \OE [N \times \rho(G_{L})]^{G}) = \OL[N]^{G} $.
\end{proof}

\section{Abelian Extensions of Number Fields} \label{section_number_fields}

Let $ L/K $ be  a tame Galois extension of number fields with group $ G $ and $ \B $ an ambiguous fractional ideal of $ L $. In section \ref{section_introduction}, we noted that in this situation $ \B $ is locally, but not necessarily globally, free over $ \OK[G] $, and we discussed some of the applications of nonclassical Hopf-Galois structures to this problem. We might therefore desire number field analogues of the results established thus far. In showing that every commutative Hopf-Galois structure admitted by a tame Galois extension of $ p $-adic fields is induced (Proposition \ref{prop_tame_structures_induced}), we exploited the fact that in this case the Galois group is an extension (in the sense of group theory) of one cyclic group by another. This is no longer the case for tame Galois extensions of number fields, and so in order to prove analogous results we shall assume in this section that $ G $ is abelian. A consequence of this assumption is that if $ \p $ is a prime of $ \OK $ then the inertia groups of the primes lying above $ \p $ all coincide; thus we may speak of the inertia group of $ \p $. We write $ K_{\p} $ for the completion of $ K $ with respect to the absolute value associated  to $ \p $, and if $ A $ is a $ K $-algebra then we write $ A_{\p} = K_{\p} \otimes_{K} A $. Similarly, we let $ \OKp $ denote the valuation ring of $ K_{\p} $, and if $ M $ is an $ \OK $-module then we write $ M_{\p} = \OKp \otimes_{\OK} M $. We note that $ L_{\p} $ need not be a field, so the results of sections \ref{section_action_of_G0} and \ref{section_induced} cannot be applied verbatim. In this section we discuss appropriate modifications of the results established in those sections. 
\\ \\
First we establish an analogue of Theorem \ref{thm_G0_trivial_p_part}:

\begin{prop} \label{prop_global_G0_trivial_p_part}
Let $ \p $ be a prime of $ \OK $, $ p $ the prime number lying below $ \p $, and $ G_{0} $ the inertia group of $ \p $. If $ G_{0} $ acts trivially on the $ p $-part of $ N $ then $ \Bp $ is a free $\Lambda_{\p}^{G} $-module. 
\end{prop}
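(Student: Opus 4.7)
The plan is to mimic the proof of Theorem~\ref{thm_G0_trivial_p_part} in the $\p$-adic completion. Set $\Gamma_{\p} = \OKp \otimes_{\OK} \mathrm{Map}(G, \B) \cong \mathrm{Map}(G, \B_{\p})$; then $\Gamma_{\p}^{G} \cong \B_{\p}$ as a $\Lambda_{\p}^{G}$-module (and $\B_{\p}$ is indeed a $\Lambda_{\p}^{G}$-module by Proposition~\ref{prop_Lambda_G_subset_A_H}, using that $\B$ is ambiguous). The goal is therefore to show $\Gamma_{\p}^{G}$ is $\Lambda_{\p}^{G}$-free. I would verify, one by one, that each of the propositions in Section~\ref{section_action_of_G0} remains valid after base change to $K_{\p}$, despite the fact that $L_{\p}$ is in general only an \'etale $K_{\p}$-algebra, not a field.

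First, $\Lambda_{\p}^{G}$ is an order in the commutative separable $K_{\p}$-algebra $H_{\p}$ and so is still clean, and tensoring the normal basis isomorphism $L \cong H$ with $K_{\p}$ shows that $L_{\p}$ is free of rank one over $H_{\p}$. Hence the argument of Proposition~\ref{prop_free_iff_projective} applies verbatim, reducing the question to $\Lambda_{\p}^{G}$-projectivity of $\Gamma_{\p}^{G}$.

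Next I would descend from $G$ to $G_{0}$. Since $G$ is abelian the inertia group $G_{0}$ is normal in $G$, and $L_{0} = L^{G_{0}}$ is unramified at $\p$, so $\OKp \otimes_{\OK} \OLO$ is an \'etale $\bar{G}$-Galois extension of commutative rings in the sense of Chase--Harrison--Rosenberg. Galois descent therefore gives
\[ (\OKp \otimes_{\OK} \OLO) \otimes_{\OKp} \Lambda_{\p}^{G} = \Lambda_{\p}^{G_{0}} \quad \text{and} \quad (\OKp \otimes_{\OK} \OLO) \otimes_{\OKp} \Gamma_{\p}^{G} = \Gamma_{\p}^{G_{0}}, \]
and Fr\"ohlich's Lemma~5.1 of \cite{Frohlich_Invariants} then reduces the problem to showing that $\Gamma_{\p}^{G_{0}}$ is $\Lambda_{\p}^{G_{0}}$-projective.

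At the $G_{0}$ level, Proposition~\ref{prop_LambdaG0_group_ring} transfers verbatim to give $\Lambda_{\p}^{G_{0}} = (\OLp[S]^{G_{0}})[T]$, and the proof of Proposition~\ref{prop_OLSG0_maximal} (which uses only $p \ndivides |S|$ and the fact that $\OLp$ is a finite product of complete discrete valuation rings) shows $\OLp[S]^{G_{0}}$ is maximal in $L_{\p}[S]^{G_{0}}$, hence hereditary. Tameness of $L/K$ at $\p$ supplies an $x \in \OLp$ with $\mathrm{Tr}_{L_{\p}/K_{\p}}(x) = 1$, and from it the construction in Proposition~\ref{prop_element_f_analogous_to_x} yields the required element $f$; the averaging argument of Propositions~\ref{prop_Lambda_G_0_hom} and~\ref{prop_GammaG0_proj} then produces a $\Lambda_{\p}^{G_{0}}$-splitting of any surjection from a free $\Lambda_{\p}^{G_{0}}$-module onto $\Gamma_{\p}^{G_{0}}$. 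The one substantive point requiring care is the descent step: one must check Chase--Harrison--Rosenberg Galois descent for the possibly non-field algebra $\OKp \otimes_{\OK} \OLO$. This is standard, and once it is in hand the rest of the argument is a direct transcription of Section~\ref{section_action_of_G0}.
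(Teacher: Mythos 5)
Your proposal follows essentially the same route as the paper's own proof: reduce freeness to projectivity via cleanness of $\Lambda_{\p}^{G}$, descend from $G$ to $G_{0}$ using ring-theoretic Galois descent for the unramified extension together with Fr\"ohlich's Lemma 5.1, and then transcribe the maximal-order, trace-element and averaging arguments of Section \ref{section_action_of_G0} to the completed setting. The one point you single out for care --- Galois descent for the \'etale, possibly non-field algebra $ \OKp \otimes_{\OK} \OLO $ --- is exactly the point the paper also relies on, so there is nothing further to add.
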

\begin{proof}
First, we note that the direct analogue of Proposition \ref{prop_free_iff_projective} holds: the ring $ \Lambda_{\p}^{G} = \OLp[N]^{G} $ is an $ \OKp $-order in the commutative separable $ K_{\p} $-algebra $ H_{\p} $, so it is a clean order. Since $ L $ is a free $ H $-module of rank one, $ L_{\p} $ is a free $ H_{\p} $-module of rank one, so $ \Bp $ is a free $ \Lambda_{\p}^{G} $-module if and only if it is projective. 
\\ \\
As in section \ref{section_action_of_G0}, we let $ \Gamma = \mathrm{Map}(G,\B) $; we then have $ \Gamma_{\p} = \mathrm{Map}(G,\Bp) $, and $ \Gamma_{\p}^{G} \cong \Bp $ via the analogue of isomorphism expressed in equation \eqref{eqn_GP_isomorphism}. Next, we state the appropriate analogy of Proposition \ref{prop_G_to_G0}. Let $ L_{0} $ be the fixed field of $ G_{0} $. Then $ \p $ is unramified in $ L_{0} $, and the extension of completed rings of integers $ {\mathfrak O}_{L_{0},\p} / \OKp $ is a Galois extension with group $ G $. Therefore by the method of the proof of Proposition \ref{prop_G_to_G0}, $ \Gamma_{\p}^{G} $ is a projective $ \Lambda_{\p}^{G} $-module if and only if $ \Gamma_{\p}^{G_{0}} $ is a projective $ \Lambda_{\p}^{G_{0}} $-module. 
\\ \\
As in section \ref{section_action_of_G0}, let $ T $ be the $ p $-part, and $ S $ the prime-to-$p$-part, of $ N $. If $ G_{0} $ acts trivially on $ T $ then the argument of Proposition \ref{prop_LambdaG0_group_ring} shows that $\Lambda_{\p}^{G_{0}} = \OLp[S]^{G_{0}}[T] $, and the argument of \cite[Proposition 5.6]{PJT_Towards} shows that $ \OLp[S]^{G_{0}} $ is the unique maximal order in the separable commutative $ K_{\p} $-algebra $ L_{\p}[S]^{G_{0}} $, in analogy with Proposition \ref{prop_OLSG0_maximal}. The construction of an element $ f \in \mathrm{Map}(G,\OLp)^{G} $ satisfying $ \theta_{T} \cdot f = 1 $, and the proof that multiplication by $ f $ is an $ \OLp[S]^{G_{0}} $-linear endomorphism of $ \Gamma^{G_{0}}_{\p} $, proceed exactly as in Propositions \ref{prop_element_f_analogous_to_x} and \ref{prop_mult_by_f}. This element $ f $ can then be used to extend an $ \OLp[S]^{G_{0}} $-homomorphism to a $ \Lambda_{\p}^{G_{0}} $-homomorphism, as in Proposition \ref{prop_Lambda_G_0_hom}. 
\\ \\
Finally, suppose that $ X $ is a free $ \Lambda_{\p}^{G_{0}} $-module and $ \varphi : X \rightarrow \Gamma^{G_{0}}_{\p} $ is a surjective $  \Lambda_{\p}^{G_{0}} $-homomorphism. Since $ \OLp[S]^{G_{0}} $ is the unique maximal order in $ L_{\p}[S]^{G_{0}} $, there exists an $ \OLp[S]^{G_{0}} $-homomorphism that splits $ \varphi $, and we may extend this to an $ \Lambda_{\p}^{G_{0}} $-homomorphism that splits $ \varphi $ as in the proof of Proposition \ref{prop_GammaG0_proj}. Therefore $ \Gamma^{G_{0}}_{\p} $ is a projective $ \Lambda_{\p}^{G_{0}} $-module, and so $ \Gamma^{G}_{\p} \cong \Bp $ is a free $ \Lambda_{\p}^{G} $-module.
\end{proof}

Now we have:

\begin{thm} \label{thm_global_abelian_tame_locally_free}
Let $ L/K $ be a tame abelian extension of number fields, $ H $ a commutative Hopf algebra giving a Hopf-Galois structure on $ L/K $, and $ \B $ an ambiguous fractional ideal of $ L $. Then $ \B $ is a locally free $ \OL[N]^{G} $-module. 
\end{thm}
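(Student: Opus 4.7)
The plan is to verify the hypothesis of Proposition \ref{prop_global_G0_trivial_p_part} at every prime of $ \OK $. Fix a prime $ \p $ of $ \OK $ with residue characteristic $ p $, and let $ G_{0} $ denote the inertia subgroup of $ \p $ in $ G $, which is well defined because $ G $ is abelian. Write $ T $ for the $ p $-part of $ N $ and $ S $ for the prime-to-$p$ part, so that $ N = S \times T $. If the action of $ G_{0} $ on $ T $ is trivial, then Proposition \ref{prop_global_G0_trivial_p_part} implies that $ \Bp $ is a free $ \Lambda_{\p}^{G} $-module; since $ \p $ was arbitrary, this yields that $ \B $ is locally free over $ \Lambda^{G} = \OL[N]^{G} $.

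To establish triviality of the action, I would adapt the inducing argument of section \ref{section_induced}. The subgroups $ S $ and $ T $ are characteristic in $ N $, hence stable under the conjugation action of $ G $. Set $ F = L^{T} $; then $ [L:F] = |T| $ equals the $ p $-part of $ |N| = |G| $, so $ G_{F} = \Gal{L/F} $ is a Sylow $ p $-subgroup of the abelian group $ G $. Consequently $ G $ decomposes canonically as an internal direct product $ G = G_{F} \times C $, where $ C $ is the unique Hall $ p^{\prime} $-subgroup of $ G $; in particular, $ C $ is a normal complement of $ G_{F} $ in $ G $. Applying \cite[Theorem 9]{Crespo_Induced} exactly as in the proof of Proposition \ref{prop_tame_structures_induced} shows that the Hopf-Galois structure on $ L/K $ given by $ H $ is induced from Hopf-Galois structures on $ F/K $ and $ L/F $ with underlying groups $ S $ and $ T $ respectively. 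Proposition \ref{prop_induced_complement_trivial_action} then gives that $ C $ acts trivially on $ T $.

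It remains to verify that $ G_{0} \subseteq C $. Since $ L/K $ is tame, $ |G_{0}| $ is coprime to $ p $; but in the decomposition $ G = G_{F} \times C $, any element of $ G $ whose order is coprime to $ p $ projects trivially to the $ p $-group $ G_{F} $ and therefore lies in $ C $. Hence $ G_{0} \subseteq C $, and the action of $ G_{0} $ on $ T $ is trivial, as required.

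The main obstacle is conceptual rather than technical: the normal complement produced by Proposition \ref{prop_tame_normal_complement} in the local setting relied on the cyclicity of $ G/G_{0} $ and on the Schur-Zassenhaus theorem, neither of which is directly available in the global setting. The abelian hypothesis on $ G $ is precisely what replaces them, supplying a canonical Sylow decomposition whose $ p^{\prime} $-part automatically absorbs every tame inertia subgroup of residue characteristic $ p $.
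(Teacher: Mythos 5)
Your proposal is correct and follows essentially the same route as the paper: reduce to Proposition \ref{prop_global_G0_trivial_p_part} at each prime $\p$, identify $G_{F}=\Gal{L/L^{T}}$ as the Sylow $p$-subgroup of the abelian group $G$ with the Hall $p^{\prime}$-part $C$ as its normal complement, invoke \cite[Theorem 9]{Crespo_Induced} and Proposition \ref{prop_induced_complement_trivial_action} to get that $C$ acts trivially on $T$, and conclude via $G_{0}\subseteq C$ from tameness. Your closing remark correctly identifies that the abelian hypothesis replaces the cyclic-quotient/Schur--Zassenhaus argument of the local case.
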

\begin{proof}
Let $ \p $ be a prime of $ \OK $, $ p $ the prime number lying below $ \p $, and $ T $ the $ p $-part of $ N $. Let $ F = L^{T} $ and $ G_{F} = \Gal{L/F} $. Then $ G_{F} $ is the $ p $-part of $ G $, and the prime-to-$p$-part of $ G $, say $ C $, is a normal complement to $ G_{F} $ in $ G $. Therefore by \cite[Theorem 9]{Crespo_Induced} the Hopf-Galois structure given by $ H $ on $ L/K $ is induced from Hopf-Galois structures on $ F/K $ and $ L/F $, and by Proposition \ref{prop_induced_complement_trivial_action} the action of $ C $ on $ T $ is trivial. Since $ \p $ is tamely ramified in $ L $, the order of $ G_{0} $ is not divisible by $ p $, and so $ G_{0} $ is contained in $ C $. Therefore the action of $ G_{0} $ on $ T $ is trivial, and so by Proposition \ref{prop_global_G0_trivial_p_part} $ \Bp $ is a free $ \OLp[N]^{G} $-module. 
\end{proof}

\end{document}